\documentclass[11pt]{article}

\usepackage[top=1in, bottom=1in, left=1in, right=1in]{geometry}
\usepackage{amssymb}
\usepackage{graphicx}
\usepackage{color}
\usepackage[normalem]{ulem}
\usepackage{amsmath, amsthm, amssymb}
\usepackage{enumerate}
\usepackage{float}
\usepackage{url}
\usepackage{caption}
\usepackage{tikz}
\usetikzlibrary{shapes,automata,positioning,arrows,fit}
\usepackage{mathtools}
\usetikzlibrary{snakes}
\usepackage{tikz-cd}
\usetikzlibrary{babel}

\newtheorem{theorem}{Theorem}[section]
\newtheorem{corollary}[theorem]{Corollary}

\newtheorem{lemma}[theorem]{Lemma}

\newtheorem{remark}[theorem]{Remark}

\theoremstyle{definition}
\newtheorem{definition}[theorem]{Definition}
\theoremstyle{definition}

\newcommand{\R}{\mathbb{R}}

\newcommand{\Z}{\mathbb{Z}}

\DeclareMathOperator{\sgn}{sgn}

\newcommand{\specialcell}[2][c]{\begin{tabular}[#1]{@{}c@{}}#2\end{tabular}}

\numberwithin{equation}{section}

\begin{document}

\title{Computing with reaction networks at input-independent speed: exponential and logarithmic functions}

\author{David F. Anderson \footnote{University of Wisconsin-Madison (anderson$@$math.wisc.edu)}, ~ Badal Joshi \footnote{Department of Mathematics, California State University San Marcos (bjoshi$@$csusm.edu)}, ~ and Tung D. Nguyen \footnote{Department of Mathematics, University of California Los Angeles (tungdnguyen@math.ucla.edu)}}

\date{}

\maketitle

\begin{abstract}
\noindent 
The concept of \textit{input-independent computational  speed} for chemistry-based analog computers was introduced in \cite{anderson2025arithmetic}, where it was shown that arithmetic operations can be computed  with a convergence rate bounded below independently of the input values. Here, by inputs we mean the numerical values encoded by the initial concentrations of designated input species, with the underlying reaction network and rate constants held fixed. Combining these operations via power series to approximate transcendental functions is possible in principle, but the number of chemical species required grows with the number of terms retained, and achieving sufficient accuracy may demand many terms---a burden that is especially severe for slowly converging series such as the power series for the logarithm.
In this paper, we begin the program of directly computing transcendental functions by chemical reaction networks by focusing on the exponential and logarithmic functions, two widely used transcendental functions, and constructing reaction network modules that compute them without relying on truncated power series.
We show that the resulting modules are mass-action systems, and prove that they achieve arbitrary accuracy given sufficient time while operating at input-independent speed. Moreover, these modules can be  combined with each other and with the arithmetic modules of~\cite{anderson2025arithmetic}: any such composite computation also runs at input-independent speed,  with a convergence rate that does not depend on the length of the feedforward chain. 
Logarithm and exponential functions serve as foundational cases, and the constructions developed here are intended to serve as templates for the direct computation of more general transcendental functions by chemical reaction networks.
~\\ \vskip 0.02in
{\bf Keywords:} reaction networks, analog computation, input-independent speed, exponential function, logarithm function, mass-action kinetics, dual-rail representation

{\bf MSC: 68Q07, 92C45, 92C42, 92E20}

\end{abstract}

\section{Introduction}

The concept of \textit{input-independent computational  speed} for a chemistry-based analog computer was introduced and articulated in \cite{anderson2025arithmetic}. 
The authors showed that an arithmetic computation of arbitrary complexity (i.e.\ consisting of any finite number of elementary steps) can be performed  at a speed bounded below, independent of the inputs to the computation as well as of the number of steps required for the computation. 
The proof relied on explicit constructions of reaction network modules for the elementary arithmetic operations. Each module was shown to operate at input-independent speed: the time required to compute $n$ decimal places is proportional to 
$n$ and independent of the input values. Moreover, when such modules are composed into a more complex computation, the elementary steps run in parallel, so  the convergence rate is controlled by the minimum intrinsic speed among the component modules rather than by serial waiting times.
This remains true not only when the computational steps are independent, but also when they are in ``series''---requiring the output of one step to be fed as input into another in a potentially long feedforward chain. 
\textit{Parallel processing at an elementary scale} is a striking feature of chemistry-based analog computers which can be leveraged for efficient and speedy computation. 

The results in \cite{anderson2025arithmetic} imply that arithmetic computations can be composed, using power series for instance, to approximate transcendental functions. Such a computation, however, requires a number of chemical species that grows with the number of terms retained in the series. Moreover, achieving high accuracy demands many terms, a burden that is especially severe for slowly converging series: the power series for the logarithm, for instance, converges only on the unit interval and does so slowly near the boundary, requiring a large number of terms to obtain even modest accuracy over a wide input range. In addition, the truncation level $m$ must be chosen in advance, so running the network longer improves convergence only to the chosen truncated series, not to the underlying transcendental function itself.

If chemistry-based analog computation is to serve as a viable computational paradigm, complex function evaluations must be achievable without requiring a large and growing number of chemical species.
The exponential and logarithmic functions are natural and important first test cases: they are foundational in mathematics and engineering, and their computation directly illuminates the challenges that arise for transcendental functions more broadly. 
Moreover, the two cases already display markedly different levels of difficulty. The exponential module developed here is relatively direct, whereas the logarithmic computation is substantially more subtle, owing in part to the fact that the logarithm may take negative values and therefore requires the dual-rail representation introduced in Section~\ref{sec:framework}. Thus, these examples are important not only because of the central role of the functions themselves, but also because they expose several of the main difficulties inherent in chemistry-based analog computation of transcendental functions. 
This motivates the design of reaction network algorithms that compute these functions directly over their entire domains, without relying on an approximation scheme. 
There are many possible ways to realize such computations chemically, but our goal is to do so in a principled and efficient manner. Accordingly, we insist that each reaction network module: (i) be \textit{efficient}, in the sense that it avoids unnecessary auxiliary species; (ii) achieve \textit{arbitrary accuracy} given sufficient computational time; (iii) compute accurately over the \textit{entire domain} of the function; and (iv) operate at \textit{input-independent speed}. 
Throughout, all reaction rate constants are taken to be $1$ for tractability. Extending these results to more general choices of rate constants is an important and difficult problem, and we expect that treating it properly will require new ideas.

We note here a departure from the constructions in \cite{anderson2025arithmetic}, where the intermediate and output species could be assigned arbitrary initial values. In the present work, by contrast, certain intermediate and output species must be initialized to specific values. Put differently, the desired computation is obtained only when these species are started in carefully balanced initial configurations; arbitrary initializations would generally alter the limiting behavior and hence the computed output.

Despite this difference, the new modules remain composable: finite feedforward combinations of the exponential and logarithm modules developed here with the arithmetic modules of~\cite{anderson2025arithmetic} run at input-independent speed.

Chemical reaction networks have been studied as computational devices from several
complementary perspectives.
In the discrete stochastic setting, Cook, Soloveichik, Winfree, and Bruck
\cite{cook2009programmability} gave a comprehensive account of the computational power
of stochastic CRNs,  Chen, Doty, and Soloveichik \cite{chen2014deterministic}
characterized the functions deterministically computable by such networks as precisely
the semilinear functions, and  Cappelletti, Ortiz-Mu\~noz, Anderson, and Winfree
\cite{cappelletti2020stochastic} showed that stochastic CRNs can robustly approximate arbitrary
discrete probability distributions in stationarity (thus providing a way to encode information).
In the continuous deterministic setting, Fages, Le\,Guludec, Bournez, and Pouly
\cite{fages2017strong} proved that continuous CRNs are strongly Turing-complete and
derived a compiler from mathematical functions into elementary reactions; Chen, Doty,
Reeves, and Soloveichik \cite{chen2023rateindependent} characterised rate-independent
continuous CRN computation as exactly the piecewise rational linear functions, using
the dual-rail representation that also appears in the present paper; and Anderson, Joshi,
and Deshpande \cite{anderson2021neural} developed a framework for implementing neural
networks via mass-action CRNs with exponentially fast convergence from any initial
condition.
On the implementation side, Soloveichik, Seelig, and Winfree \cite{soloveichik2010dna}
showed that DNA strand displacement reactions can implement arbitrary CRNs in vitro
\cite{qian2011scaling,qian2011neural}.

Prior work on computing the exponential and logarithm in analog and ODE-based settings
is directly relevant here.
Bournez, Campagnolo, Gra\c{c}a, and Hainry \cite{bournez2007polynomial} proved that
every function computable in the sense of computable analysis can be defined by a
polynomial ODE system over compact intervals, and Bournez, Gra\c{c}a, and Pouly
\cite{bournez2017ptime} established that polynomial time computation corresponds
precisely to polynomial-length solutions of polynomial ODEs, proving that the General
Purpose Analog Computer (GPAC) \cite{shannon1941} and Turing machines are equivalent in time complexity;
this gives a complexity-theoretic framework within which the efficiency of ODE-based
function evaluation---including the exponential and logarithm---can be meaningfully
situated.

Some of the polynomial ODE systems we use as intermediate constructions
(Systems~1 and~2 in Section~\ref{sec:log}) are not \emph{themselves} mass-action:
their vector fields violate the sign condition of \cite{hars1981inverse}, in that a
negative term of some $\dot x_i$ lacks a factor of $x_i$. This is a property of the
particular vector field, not a statement that the corresponding function fails to be
CRN-computable; indeed, by the dual-rail compilation of Fages, Le\,Guludec, Bournez, and
Pouly \cite{fages2017strong}, \emph{any} polynomial ODE system can be realized by a
mass-action CRN. Our aim here is not realizability as such, but \emph{direct} mass-action
constructions satisfying the requirements (i)--(iv) above, developed in
Section~\ref{sec:log} and culminating in System~6.
Daniel, Rubens, Sarpeshkar, and Lu \cite{daniel2013analog} demonstrated experimentally
that synthetic analog gene circuits in living \textit{E.\ coli} can compute logarithmic
functions over four orders of magnitude, exploiting the analogy between transistor and
chemical kinetics; Sarpeshkar \cite{sarpeshkar2014analog} analyzed the resource
efficiency of such computations from fundamental noise limits.
Chou \cite{chou2017logarithm} studied logarithm computation using abstract CRNs,
proposing a Pad\'e-approximation method that trades off reaction count against accuracy.
Chou asserts, without proof, that CRNs cannot compute the logarithm exactly.
Whether such an assertion holds depends on the notion of computation one adopts. Under
the notion used in this paper---convergence of a trajectory to
the exact value from a compatible initialization---it does not hold: we give several
mass-action CRN modules whose trajectories converge to the exact logarithmic value. There
is, however, a distinct and stronger sense in which the logarithm is not CRN-computable:
any function stabilized by a mass-action CRN---in the sense of absolute functional
robustness---must be algebraic \cite[Lem.~14]{hemery2024absolute}, and $\ln$ is not. We make
these framework distinctions precise in Section~\ref{sec:framework}
(Definition~\ref{def:computation} and
Remark~\ref{rem:frameworks}).
Moreover, whereas Chou's approach requires increasing the number of reactions and
intermediate species to improve accuracy, our constructions achieve arbitrary accuracy
with a fixed network simply by allowing more computational time.

In a related line of work on the \emph{speed} of CRN computation, Huang, Klinge,
Lathrop, Li, and Lutz \cite{huang2019computability} studied which real numbers are
computable by CRNs in \emph{real time}---a speed constraint requiring the output to
remain within $2^{-t}$ of the target for all $t \ge 1$, which is strictly stronger than
our notion of input-independent speed---and Huang, Klinge, and Lathrop
\cite{huang2019equivalence} established the equivalence of real-time CRN and GPAC
computation \cite{fletcher2025}.
The Hartmanis-Stearns conjecture \cite{hartmanis1965} asserts that no irrational
algebraic number is real-time computable by a Turing machine; by contrast, CRNs can
compute all algebraic numbers in real time \cite{huang2019computability}, highlighting
a fundamental difference between the two computational paradigms.
Finally, Anderson and Joshi \cite{anderson2025arithmetic} introduced
\textit{input-independent computational speed} for mass-action systems and showed that
elementary arithmetic operations can be performed  at a convergence rate bounded below independently of the input values; the present paper extends that program to the transcendental functions
$e^x$ and $\ln x$.

The paper is structured as follows: in Section~\ref{sec:framework}, we describe the mathematical framework and notation used throughout the paper. In Section~\ref{sec:exp} we construct and analyze a reaction network module for computing the exponential function. In Section~\ref{sec:log} we develop a sequence of logarithm modules---from simple polynomial systems through mass-action and dual-rail variants, culminating in System~5, which achieves input-independent speed for $a^*\ge 1$, and System~6, which achieves it on the full positive domain. In Section~\ref{sec:general_composite} we prove a general composition theorem showing that  finite feedforward combinations of the exponential and logarithm modules developed here with the arithmetic modules of~\cite{anderson2025arithmetic} run at input-independent speed. Finally, Section~\ref{sec:discuss} contains a summary and discussion of the results, together with some directions for future work.

\section{Mathematical framework}
\label{sec:framework}

The constructions in this paper are polynomial ODE systems. 
Our initial constructions are sometimes non-mass-action, but in each case we refine them to produce a mass-action reaction network. 
In the mass-action setting, each reaction proceeds at a rate proportional to the product of the concentrations of its reactant species, and trajectories remain in the nonnegative orthant, so concentrations never become negative.
A necessary and sufficient condition for a polynomial ODE system to be mass-action is that every negative term in the equation for $\dot x_i$ contains a factor of $x_i$ \cite{hars1981inverse}: chemically, this simply means that species $X_i$ can only be consumed if it participates as a reactant.  

Throughout, we use standard chemical reaction network notation. Chemical species are denoted by capital letters, such as $X$, $Y$, and $Z$, while their corresponding concentrations are denoted by the associated lower-case letters, such as $x(t)$, $y(t)$, and $z(t)$. When no confusion will arise, we suppress the explicit dependence on time.

As a simple example that is relevant to the work in this paper, consider the reaction network
\[
X \to X+Z,
\qquad
Y \to Y+Z,
\qquad
Z \to \emptyset.
\]
Here $X$ and $Y$ are input species and $Z$ is the output species. Under mass-action kinetics, the corresponding system of differential equations is
\[
\dot x = 0,
\qquad
\dot y = 0,
\qquad
\dot z = x+y-z.
\]
Thus the input concentrations remain constant in time, while the output species evolves according to a linear relaxation equation.

If the initial input concentrations are $x(0)=x_0$ and $y(0)=y_0$, then
\[
x(t)=x_0,
\qquad
y(t)=y_0
\]
for all $t\ge 0$, and hence
\[
\dot z = x_0+y_0-z.
\]
It follows that
\[
z(t) = x_0+y_0 + \bigl(z(0)-(x_0+y_0)\bigr)e^{-t}.
\]
In particular,
\[
\lim_{t\to\infty} z(t)=x_0+y_0.
\]
Thus the concentration of the output species $Z$ converges exponentially fast to the sum of the two input values. In this sense, the reaction network carries out the computation of addition.

The preceding example illustrates a feature that will be important throughout the paper: not only does the output converge to the desired value, but it does so at an exponential rate. This motivates the following definition.

\begin{definition}
\label{def:speed}
Let $u(t)$ be a real-valued function with $\lim_{t\to\infty} u(t)=u^*$.

Following~\cite{anderson2025arithmetic}, define the asymptotic convergence rate of $u(t)$ to $u^*$ by
\[
\operatorname{rate}(u\to u^*)
:=-\limsup_{t\to\infty}\frac{\ln|u(t)-u^*|}{t},
\]
where we use the extended-real convention $\ln 0=-\infty$. We say that $u(t)$ converges to $u^*$ with rate at least $\rho>0$ if $\operatorname{rate}(u\to u^*)\ge\rho$. Equivalently, for every $\eta\in(0,\rho)$ there exist constants $C_\eta>0$ and $T_\eta\ge0$ such that
\[
|u(t)-u^*|\le C_\eta e^{-(\rho-\eta)t}
\qquad\text{for all } t\ge T_\eta.
\]
In this case, we also say that the computation of $u^*$ by $u(t)$ has speed at least $\rho$. Indeed, exponential convergence with rate $\rho$ implies that the number of correct decimal digits grows linearly in time, with growth proportional to $\rho$.
\end{definition}

For the addition module above, we have
\[
z(t)-(x_0+y_0)=\bigl(z(0)-(x_0+y_0)\bigr)e^{-t},
\]
and hence
\[
|z(t)-(x_0+y_0)|\le Ce^{-t}
\]
with $C=\bigl|z(0)-(x_0+y_0)\bigr|$. Thus the output converges to the correct value with rate at least $1$, and so the computation has speed at least $1$. Importantly, this lower bound does not depend on the input values $x_0$ and $y_0$. For this reason, we say that the addition module computes at \textit{input-independent speed}.

By contrast, not every natural reaction network construction has this property. For example, if $A$ is an input species with constant concentration $a>0$, then the network
\[
\emptyset \to X,
\qquad
A+X \to A
\]
gives rise to the differential equation
\[
\dot x = 1-ax.
\]
The solution is $x(t) = \frac1a + (x_0-\frac1a)e^{-at},$ which converges to $1/a$ at rate $a$, so the speed of computation depends on the input value itself. In particular, as $a\to 0$, the convergence becomes arbitrarily slow. This is precisely the kind of input dependence that we seek to avoid.

In several of the constructions below, we will need to represent real numbers that may be either positive or negative while still working with nonnegative concentrations. For this purpose, we use the standard \emph{dual-rail representation}: a real number $r\in\R$ is encoded by a pair of nonnegative quantities $(r_p,r_n)$ and interpreted as
\[
r = r_p-r_n.
\]
This representation is not unique, but in many constructions one arranges that at most one of the two rails is active in the limit. In that case, $r_p$ records the positive part of the value and $r_n$ records the magnitude of the negative part. Dual rail will be used below both to allow arbitrary real inputs and to represent outputs, such as $\ln a^*$, that may be negative.

We conclude this section by making precise the notion of computation used throughout,
and situating it relative to two other notions in the literature.

\begin{definition}[Computation at a given speed]
\label{def:computation}
Let $f:D\to\R$ with $D\subseteq\R^k$. Consider a reaction network equipped with
designated \emph{input} species, whose concentrations $a_1(t),\dots,a_k(t)$ (possibly in
dual-rail form) may evolve in time; a designated \emph{output} species, or dual-rail pair,
whose value is interpreted as the result; and a specified set of \emph{admissible} initial
conditions for the output species and all remaining (intermediate) species. We say the
network \emph{computes $f$ at intrinsic speed at least $\rho>0$} if, for every
$a^*=(a_1^*,\dots,a_k^*)\in D$, every choice of $\rho_1,\dots,\rho_k>0$, every collection of input
trajectories $a_i(t)\to a_i^*$ converging at rates at least $\rho_1,\dots,\rho_k$
respectively, and every admissible initialization, the output
converges to $f(a^*)$ at rate at least $\min\{\rho_1,\dots,\rho_k,\rho\}$ in the sense of
Definition~\ref{def:speed}. If a single $\rho>0$ may be chosen uniformly over all $a^*\in D$, we say the network computes $f$ at
\emph{input-independent speed}.

We refer to such a network as a \emph{module} when it is used as a component of a
larger computation.
\end{definition}

\begin{remark}[Relation to other notions of CRN computation]
\label{rem:frameworks}
Definition~\ref{def:computation} fixes a set of admissible initial conditions and
asks for convergence to the exact value at a uniform rate. It should be distinguished from
two other notions. First, in the \emph{absolutely functionally robust} (AFR) model of Hemery
and Fages \cite{hemery2024absolute}, the output must converge to $f(X)$ from a
full-dimensional domain of admissible initial conditions for the intermediate and output species
(equivalently, the CRN \emph{stabilizes} $f$).
Their main theorem characterizes the functions computable in this sense as \emph{exactly}
the real algebraic functions. Since $\exp$ and $\ln$ are transcendental, they are
\emph{not} AFR-computable, a fact consistent with the initialization sensitivity of our
modules, which we analyze in Section~\ref{sec:init}. Second, \emph{real-time} computation in the sense of Huang, Klinge,
Lathrop, Li, and Lutz \cite{huang2019computability} imposes the stronger requirement that
the output remain within $2^{-t}$ of the target for all $t\ge 1$; input-independent speed
is the weaker requirement of a uniform exponential rate. Thus the modules of this paper
compute $\exp$ and $\ln$ exactly, and at input-independent speed, but not necessarily with
absolute robustness.
\end{remark}

\section{Computing the exponential function at input-independent speed}
\label{sec:exp}

We begin with the exponential function, which turns out to admit a relatively direct construction. Our goal is to compute $e^{a^*}$ by a mass-action reaction network, where the input value $a^*$ is encoded by a designated species. We first treat the case of a nonnegative input, $a^*\in\R_{\ge 0}$, for which a direct module is available. We then extend the construction to arbitrary real inputs by passing to the dual-rail representation introduced in Section~\ref{sec:framework} and composing the nonnegative exponential module with reciprocal and multiplication modules.

Consider first the case of a nonnegative input $a^*\in\R_{\ge 0}$. We use the reaction network
\begin{equation} \label{eq:exponential}
\begin{aligned}
\{A \to A+Z, \quad Z \to 0, \quad &A+X \to A+2X, \quad  Z+X \to Z\}, \\
\dot z(t) = a(t) - z(t), \qquad
&\dot x(t) = x(t) \left(a(t) - z(t)\right). 
\end{aligned}
\end{equation}
Note that the reactions leave $A$ unchanged. In general, the system in \eqref{eq:exponential} may be only a subsystem of a larger network, and so we allow the input concentration $a(t)$ to evolve in time and converge to a limit $a^*$. 

We begin, however, with the simpler case in which the input is fixed, i.e.\ $a(t)\equiv a^*$.
In this constant-input case we can write the solution explicitly. For the initial values $x(0)=1$ and $z(0)=0$, the solution for all $t\ge 0$ is
\begin{equation} \label{eq:exponential_sol}
\begin{aligned}
z(t) = a^* \left(1 - e^{-t}\right), \qquad x(t) = \exp \left( a^* \left(1 - e^{-t}\right) \right).
\end{aligned}
\end{equation}
Hence $x(t)\to e^{a^*}$ as $t\to\infty$. Moreover, the output converges to the correct value at rate at least $1$, independently of the input value $a^*$.

Now suppose that the input $a(t)$ evolves in time and converges to a limiting value $a^*$, i.e.\ $a(t)\to a^*$. Assume that the rate of this convergence is at least $\rho_a$, in the sense of Definition~\ref{def:speed}. Since
\[
\dot x(t)=x(t)\bigl(a(t)-z(t)\bigr),
\]
any solution with $x(0)>0$ satisfies $x(t)>0$ for all $t\ge 0$. Hence $\ln x(t)$ is well defined, and
\[
\frac{d}{dt}\ln x(t)=\frac{\dot x(t)}{x(t)}=a(t)-z(t)=\dot z(t).
\]
Thus the quantity $\ln x(t)-z(t)$ is constant in time. Integrating, we obtain
\[
\ln x(t)-\ln x(0)=z(t)-z(0).
\]
In particular, for any initial values satisfying $\ln x(0)=z(0)$, for instance $x(0)=1$ and $z(0)=0$, we have
\[
\ln x(t)=z(t)
\qquad\text{for all } t\ge 0,
\]
and therefore
\[
x(t)=e^{z(t)}
\qquad\text{for all } t\ge 0.
\]

We may now state and prove the main result for the nonnegative exponential module. The key point is that, once the initial values are chosen so that $\ln x(t)=z(t)$ for all $t\ge 0$, the convergence of $x(t)$ to $e^{a^*}$ is determined entirely by the convergence of $z(t)$ to $a^*$.

\begin{theorem}[Nonnegative exponential at input-independent speed]
\label{thm:exponential}
Consider the mass-action system in \eqref{eq:exponential} with initial conditions satisfying $\ln x(0) = z(0)$, where $a(t)$ is a non-negative-valued function of time that converges to a non-negative constant $a^*$ at rate at least $\rho_a$. 
Then $x(t) \to e^{a^*}$ at a rate of at least $\min\{\rho_{a},1\}$.    
\end{theorem}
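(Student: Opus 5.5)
The proof has two steps: first show $z(t)\to a^*$ at rate at least $\min\{\rho_a,1\}$, then transfer this rate to $x(t)=e^{z(t)}$ by a local Lipschitz estimate for the exponential.

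By the computation preceding the theorem, the initialization $\ln x(0)=z(0)$ gives the invariant $x(t)=e^{z(t)}$ for all $t\ge 0$. So everything reduces to controlling $z$.

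\emph{Step 1: convergence of $z$.} The equation $\dot z = a(t)-z$ is linear, so variation of constants gives
\[
z(t)-a^* = \bigl(z(0)-a^*\bigr)e^{-t} + \int_0^t e^{-(t-s)}\bigl(a(s)-a^*\bigr)\,ds .
\]
Fix $\eta\in(0,\min\{\rho_a,1\})$ and set $\mu=\min\{\rho_a,1\}-\eta$.

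By hypothesis there are $C,T$ with $|a(s)-a^*|\le Ce^{-(\rho_a-\eta/2)s}$ for $s\ge T$. Since $a$ is continuous and convergent, it is bounded on $[0,T]$.

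Split the integral at $T$:
\begin{itemize}
\item The piece over $[0,T]$ is bounded by a constant times $e^{-t}$.
\item The piece over $[T,t]$ is bounded by
\[
C\int_T^t e^{-(t-s)}e^{-(\rho_a-\eta/2)s}\,ds .
\]
Call the inner exponents $\alpha=1$ and $\beta=\rho_a-\eta/2$.
\end{itemize}

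The main technical point is the resonant case $\alpha=\beta$. There the integral is of order $te^{-t}$, and this is why we lose an arbitrary $\eta$ rather than getting the rate exactly.

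I would handle all cases at once with the elementary bound
\[
\int_T^t e^{-\alpha(t-s)}e^{-\beta s}\,ds \le (t-T)\,e^{-\min\{\alpha,\beta\}(t-T)}\,e^{-\beta T}\cdot\text{const}.
\]
The factor $t$ is then absorbed by giving up the remaining $\eta/2$ in the exponent. This yields $|z(t)-a^*|\le C' e^{-\mu t}$ for large $t$, i.e.\ $\operatorname{rate}(z\to a^*)\ge\min\{\rho_a,1\}$.

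\emph{Step 2: transfer to $x$.} Since $z(t)\to a^*$, $z$ is bounded, say $|z(t)|\le M$ for all $t$. By the mean value theorem,
\[
|x(t)-e^{a^*}| = |e^{z(t)}-e^{a^*}| \le e^{\max\{M,|a^*|\}}\,|z(t)-a^*|.
\]
Hence $|x(t)-e^{a^*}|\le C'' e^{-\mu t}$ for large $t$. Since $\eta$ was arbitrary, $\operatorname{rate}(x\to e^{a^*})\ge\min\{\rho_a,1\}$. Nonnegativity of $a$ plays no role beyond keeping the system mass-action.
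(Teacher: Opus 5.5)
Your proposal is correct and follows the same route as the paper. First you get $z(t)\to a^*$ at rate at least $\min\{\rho_a,1\}$: the paper simply cites Lemma~\ref{thm:system1_analysis} with $g_1=a$, $g_2\equiv 1$, and your variation-of-constants argument re-derives it. Then you transfer the rate to $x=e^{z}$ via the local Lipschitz property of $\exp$ (Lemma~\ref{lem:lipschitz}, which your mean-value bound reproduces).

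One small quibble with your commentary, which does not affect the proof. The resonant term $te^{-t}$ still has asymptotic rate exactly $1$ in the sense of Definition~\ref{def:speed}, so resonance costs nothing in the final statement. The $\eta$ comes from the equivalent $C_\eta e^{-(\rho-\eta)t}$ formulation, which is already present in the hypothesis on $a$.
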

\begin{proof}
By Lemma~\ref{thm:system1_analysis} (applied with $g_1(t) = a(t)$ and $g_2(t) \equiv 1$), $z(t) \to a^*$ at rate at least $\rho_z = \min\{\rho_a,1\}$. 
Since $x(t) = e^{z(t)}$ for all $t \ge 0$ and the exponential function is locally Lipschitz on $\R$, Lemma~\ref{lem:lipschitz} gives $x(t) \to e^{a^*}$ at rate at least $\rho_z = \min\{\rho_a, 1\}$.
\end{proof}

\begin{remark}[Initialization sensitivity]\label{rem:init}
Theorem~\ref{thm:exponential} requires the initial conditions to satisfy $\ln x(0) = z(0)$. This is not a mere convenience: if the condition fails, the identity $x(t) = e^{z(t)}$ no longer holds, and the output converges to $e^{\ln x(0)-z(0)} \cdot e^{a^*}$ rather than $e^{a^*}$---a persistent multiplicative error determined entirely by the initialization mismatch. The same sensitivity arises in every subsequent construction in this paper: the real exponential module of Theorem~\ref{thm:real_exponential} requires $\ln x(0)=z(0)$ for each of its two sub-modules, and the logarithm modules of Systems~1, 2, 3, 4p, and~4n each depend on an analogous compatibility condition between initial values. This stands in contrast to the arithmetic modules of \cite{anderson2025arithmetic}, where the output converges to the correct value from any admissible strictly positive initial condition.
\end{remark}

We now extend the construction to arbitrary real inputs. Let $a\in\R$ and write
\[
a=a_p-a_n,
\qquad a_p,a_n\ge 0,
\]
so that $a$ is represented in dual-rail form by the pair $(a_p,a_n)$. Then
\[
e^a=e^{a_p-a_n}=\frac{e^{a_p}}{e^{a_n}}.
\]
Thus the exponential of a real input can be computed by composing two instances of the nonnegative exponential module with a reciprocal module and a multiplication module. Input-independent constructions for reciprocal and multiplication were given in \cite{anderson2025arithmetic}, and the resulting composition again preserves input-independent speed.

\begin{theorem}[Real exponential at input-independent speed]
\label{thm:real_exponential}
Let
\[
a(t)=a_p(t)-a_n(t)
\]
be a real-valued input converging to
\[
a^*=a_p^*-a_n^*\in\R,
\]
where $a_p(t)$ and $a_n(t)$ are non-negative-valued functions of time converging to non-negative limits $a_p^*$ and $a_n^*$ each at rate at least $\rho_a>0$. Consider the composition of:
\begin{enumerate}
    \item two instances of the nonnegative exponential module \eqref{eq:exponential}, computing $e^{a_p^*}$ and $e^{a_n^*}$, each initialized with $\ln x(0)=z(0)$;
    \item a reciprocal module computing $1/e^{a_n^*}$;
    \item a multiplication module computing
    \[
    e^{a_p^*}\cdot \frac{1}{e^{a_n^*}}=e^{a^*}.
    \]
\end{enumerate}
Then the overall system computes $e^{a^*}$ at rate at least $\min\{\rho_a,1\}$, independently of the input value $a^*$.
\end{theorem}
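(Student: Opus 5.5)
The proof is a chain of rate propagations through the feedforward composition. We control each module in order and pass the rate bound from its output to the next module's input.

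\emph{Exponential modules.} Apply Theorem~\ref{thm:exponential} separately to the two instances of \eqref{eq:exponential}. The first receives input $a_p(t)$ and the second receives $a_n(t)$. Each input is nonnegative and converges at rate at least $\rho_a$, and each instance is initialized with $\ln x(0)=z(0)$. So the outputs $x_p(t)\to e^{a_p^*}$ and $x_n(t)\to e^{a_n^*}$, each at rate at least $\rho_1:=\min\{\rho_a,1\}$. Both limits satisfy $e^{a_p^*},e^{a_n^*}\ge 1$, and both outputs are strictly positive for all $t$, since $x(t)=e^{z(t)}>0$. This positivity is what the downstream arithmetic modules of \cite{anderson2025arithmetic} need as input hypotheses.

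\emph{Reciprocal and multiplication.} Feed $x_n(t)$ into the reciprocal module of \cite{anderson2025arithmetic}. By its input-independent-speed guarantee, in the sense of Definition~\ref{def:computation}, its output $y(t)$ converges to $e^{-a_n^*}$ at rate at least $\min\{\rho_1,\rho_{\mathrm{rec}}\}$, where $\rho_{\mathrm{rec}}$ is the module's intrinsic speed. Next, feed $x_p(t)$ and $y(t)$ into the multiplication module. Its output $w(t)$ converges to $e^{a_p^*}e^{-a_n^*}=e^{a^*}$ at rate at least the minimum of the input rates and the module's intrinsic speed $\rho_{\mathrm{mult}}$.

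\emph{Main obstacle.} The final bound $\min\{\rho_a,1\}$ requires $\rho_{\mathrm{rec}},\rho_{\mathrm{mult}}\ge 1$ uniformly in the input values, with all rate constants equal to $1$. For the multiplication module this should be immediate, since its output obeys a linear relaxation $\dot w=x_py-w$ with unit decay rate. For the reciprocal module, the naive construction has speed depending on the input, as the $1/a$ example in Section~\ref{sec:framework} shows. I would therefore cite the specific input-independent reciprocal construction of \cite{anderson2025arithmetic} and check that its guaranteed rate is at least $1$ for all inputs, here bounded below by $1$.

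If that guaranteed rate is only some fixed positive $\rho_{\mathrm{rec}}<1$, the statement would instead hold with $\min\{\rho_a,1,\rho_{\mathrm{rec}}\}$, which is still input-independent. I would resolve this by quoting the precise rate from \cite{anderson2025arithmetic}.

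The rate-combination steps themselves follow from the closure properties already used in the proof of Theorem~\ref{thm:exponential}. These are Lemma~\ref{thm:system1_analysis} for linear relaxations driven by convergent inputs, and Lemma~\ref{lem:lipschitz} for locally Lipschitz maps such as $(u,v)\mapsto uv$ and $u\mapsto 1/u$ on a neighborhood of limits bounded away from $0$. Because every intermediate rate is bounded by $\min\{\rho_a,1\}$ and none of the constants depend on $a^*$, the overall rate is input-independent.
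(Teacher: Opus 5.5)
Your proposal is correct and follows the paper's proof. Like the paper, you apply Theorem~\ref{thm:exponential} to each rail and then invoke the time-dependent-input rate guarantees of the reciprocal and multiplication modules of \cite{anderson2025arithmetic}, which the paper simply cites. The obstacle you flag is closed by Lemma~\ref{thm:system2_analysis}: the reciprocal module is $\dot y=y(1-x_n(t)y)$ (as recalled before Lemma~\ref{lem:system2_log}), so taking $g_1\equiv 1$, $g_2=x_n$, $m=1$ gives rate at least $\min\{\min\{\rho_a,1\},\,mg_1^*\}=\min\{\rho_a,1\}$. This lemma, rather than Lemma~\ref{lem:lipschitz} applied to $u\mapsto 1/u$, is the right tool, because the module's output is not $1/x_n(t)$.
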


\begin{proof}
By Theorem~\ref{thm:exponential}, the two exponential modules converge to $e^{a_p^*}$ and $e^{a_n^*}$ respectively at rate at least $\min\{\rho_a, 1\}$. 
 The time-dependent-input inversion result of \cite{anderson2025arithmetic} gives that the reciprocal module converges to $1/e^{a_n^*}$ at the same rate. 
The multiplication module from \cite{anderson2025arithmetic}  has the analogous time-dependent-input rate guarantee and yields convergence of the product to $e^{a_p^*}/e^{a_n^*} = e^{a^*}$ at rate at least $\min\{\rho_a, 1\}$. 
\end{proof}
The explicit constructions of the reciprocal and multiplication modules are given in \cite{anderson2025arithmetic}.

\section{Computing the logarithm function at input-independent speed}
\label{sec:log}

We now turn to the logarithm, whose computation is substantially more subtle than that of the exponential. One basic difficulty is that $\ln a^*$ may be negative, whereas chemical concentrations are nonnegative, so any chemistry-based realization on the full domain $a^*>0$ must ultimately use the dual-rail representation introduced in Section~\ref{sec:framework}.  A second difficulty is that natural candidate systems either fail to satisfy mass-action kinetics, restrict the allowable input domain, or lose the property of input-independent speed. For this reason, we proceed iteratively: we begin with simple polynomial systems that clarify the underlying mechanism, then modify them step by step to recover chemical realizability, mass-action structure, full domain, and finally input-independent speed. The intermediate modules that arise along the way are also of independent interest. Indeed, in settings where one does not require all of the constraints imposed here simultaneously, these alternative constructions may themselves provide useful ways to compute the logarithm.

\subsection{A first attempt}

We begin with a simple polynomial dynamical system that computes $\ln(a)$ for $a>0$. Although this first construction is not mass-action, it captures the basic mechanism underlying the later modules and is motivated by the same idea used for the exponential.  Consider
\begin{align} \tag{System 1} \label{eq:polyforlog}
    \dot{x}(t)=a(t)-z,\qquad \dot{z}(t)=z(a(t)-z).
\end{align}
We first analyze this system in the simple case where $a(t)\equiv a$ is constant. The equation for $z$ is logistic, with carrying capacity $a$ and intrinsic growth rate $a$, and its solution for initial value $z(0)>0$ is
\[
z(t)=\frac{a}{1+\left(\frac{a}{z(0)}-1\right)e^{-at}},
\]
as may be verified by direct substitution. In particular, $z(t)\to a$ at rate $a$ as $t\to\infty$. Moreover,
\[
\frac{\dot z(t)}{z(t)}=a-z(t)=\dot x(t),
\]
and hence
\[
\ln z(t)-\ln z(0)=x(t)-x(0).
\]
Therefore, if the initial values satisfy $x(0)=\ln z(0)$, then
\[
x(t)=\ln z(t)
\qquad\text{for all } t\ge 0.
\]
Since $z(t)\to a$, it follows immediately that $x(t)\to \ln a$.

This system correctly captures the logarithmic relation, but it still falls short of the goals set out in the introduction in two important respects:
\begin{enumerate}[(i)]
    \item It is not mass-action, owing to the $-z$ term on the right-hand side of $\dot x(t)$. This may be acceptable in a non-chemical implementation, such as a polynomial dynamical system built from quantities that are allowed to take both positive and negative values.
    \item Its speed of convergence depends on the input $a$: as the following lemma shows, the rate of convergence is exactly $a$.  
\end{enumerate}

\begin{lemma}
\label{lem:poly_log_speed}
    The polynomial system \eqref{eq:polyforlog} with $a(t)\equiv a$ and $x(0)=\ln z(0)$ and $z(0)\ne a$ converges to $(z^*,x^*)=(a,\ln a)$ at rate exactly $a$; if $z(0)=a$ (and hence $x(0)=\ln a$), the system starts at equilibrium.
\end{lemma}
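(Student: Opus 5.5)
We use the explicit solutions already derived for \eqref{eq:polyforlog}. Throughout we assume $a>0$ and $z(0)>0$; the latter is needed for $\ln z(0)$ to make sense. If $z(0)=a$, then $\dot z(0)=0$ and $\dot x(0)=0$. By uniqueness of solutions the system stays at $(a,\ln a)$, which settles the trivial case. So assume $z(0)\neq a$ and put $c:=a/z(0)-1$. Then $c\neq 0$ and $c>-1$, and
\[
z(t)=\frac{a}{1+c e^{-at}}, \qquad x(t)=\ln z(t)=\ln a-\ln\bigl(1+ce^{-at}\bigr),
\]
where the formula for $x$ uses the invariant $x(t)=\ln z(t)$ shown above. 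The denominator $1+ce^{-at}$ is always positive, because $c>-1$.

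Next compute the two errors exactly:
\[
z(t)-a=-\frac{ace^{-at}}{1+ce^{-at}},\qquad x(t)-\ln a=-\ln\bigl(1+ce^{-at}\bigr).
\]
As $t\to\infty$ we have $1+ce^{-at}\to 1$. Hence $|z(t)-a|\sim a|c|e^{-at}$. Using $\ln(1+u)\sim u$ as $u\to 0$, we also get $|x(t)-\ln a|\sim |c|e^{-at}$. Since $c\neq 0$, both errors are nonzero for large $t$, so the logarithms below are finite. Taking logarithms and dividing by $t$ gives
\[
\frac{\ln|z(t)-a|}{t}=\frac{\ln(a|c|)+o(1)}{t}-a\to -a,
\]
and the same limit for $x$. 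So the $\limsup$ in Definition~\ref{def:speed} is an actual limit, equal to $-a$. The rate for each component is therefore exactly $a$.

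Finally, define the rate of the vector $(z,x)$ through a norm of the error; any norm gives the same asymptotics. The error is then comparable to $\max(|z-a|,|x-\ln a|)$, which is also asymptotic to a constant times $e^{-at}$, so the rate of the pair is again exactly $a$.

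The argument is mostly routine. The one step needing care is showing the rate is \emph{exactly} $a$ and not merely at least $a$. That requires a matching lower bound on the error, and it comes from $c\neq 0$ together with the asymptotic $\ln(1+u)\sim u$, which keeps the leading coefficient nonzero.
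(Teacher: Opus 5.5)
Your proof is correct and is essentially the paper's argument: both use the explicit logistic solution, note that the denominator $1+ce^{-at}\to 1$ so that $|z(t)-a|$ is bounded above and below by constant multiples of $e^{-at}$, and then pass this to $x=\ln z$. The paper does that last step by noting $\ln$ is $C^1$ near $a$ with nonzero derivative $1/a$, while you use the exact identity $x(t)-\ln a=-\ln(1+ce^{-at})$ together with $\ln(1+u)\sim u$, and your formula $z(t)-a=-ace^{-at}/(1+ce^{-at})$ has the correct sign (the paper's displayed numerator has the opposite sign, which is immaterial since only $|z(t)-a|$ enters the rate).
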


\begin{proof}
The explicit solution
\[
z(t)=\frac{a}{1+(a/z(0)-1)e^{-at}}
\]
gives
\[
z(t)-a = \frac{-a\bigl(1-a/z(0)\bigr)e^{-at}}{1+\bigl(a/z(0)-1\bigr)e^{-at}}.
\]
As $t\to\infty$ the denominator tends to $1$, so for all sufficiently large $t$,
\[
c_1 e^{-at} \le |z(t)-a| \le c_2 e^{-at}
\]
for positive constants $c_1,c_2$ depending on $a$ and $z(0)$ (assuming $z(0)\ne a$). This gives $z(t)\to a$ at rate exactly $a$. Since $x(t)=\ln z(t)$ and $\ln$ is $C^1$ near $a$ with nonzero derivative $1/a$, the same two-sided asymptotic holds for $x(t)-\ln a$, so $x(t)\to\ln a$ at rate exactly $a$.
\end{proof}

In the more general case where $a(t)$ converges to a positive value $a^*$ at rate at least $\rho_a$, there is still no uniform lower bound on the speed of computation. Indeed, applying Lemma~\ref{thm:system2_analysis} with $g_1(t)=a(t)$, $g_2(t)\equiv 1$, and $m=1$, we find that the rate of convergence is at least $\min\{\rho_a,a^*\}$. Thus, as the input $a^*$ gets smaller, the guaranteed speed of computation also becomes smaller. Equivalently, the time required to compute $\ln a^*$ to any fixed accuracy cannot be bounded uniformly over all $a^*>0$.

\subsection{A restricted-domain variant}

As noted in Section~\ref{sec:framework}, a necessary condition for a polynomial system to arise from mass-action kinetics is that any negative term in $\dot x_i$ must contain a factor of $x_i$. Thus \eqref{eq:polyforlog} is not mass-action, because of the  $-z$ term in $\dot x$.

Nevertheless, it is still natural to ask whether \eqref{eq:polyforlog} might be realizable within a broader chemistry-based framework. In particular, when $a>1$, the relation $x(t)=\ln z(t)$ shows that $x(t)$ remains nonnegative for all $t\ge 0$. 
Indeed, $z(t)\ge 1$ is maintained as an invariant: if $z(t)=1$ and $a(t)\ge 1$, then $\dot z(t)=z(t)(a(t)-z(t))=a(t)-1\ge 0$, so the trajectory cannot cross below~1. With $z(0)\ge 1$ and $a(t)\ge 1$ for all $t$, invariance follows by a standard comparison argument.
This suggests that, under the input restriction $a\ge 1$, the system may still admit a chemically meaningful interpretation, even though it is not mass-action in the strict sense. We refer to \eqref{eq:polyforlog} together with the restriction $a\ge 1$ as (System~1r). 

This restricted system also has a uniformly positive lower bound on its speed of convergence. Indeed, since $a^*\ge 1$, the bound $\min\{\rho_a,a^*\}$ from the previous subsection immediately yields
\[
\min\{\rho_a,a^*\}\ge \min\{\rho_a,1\}.
\]
Thus, on the restricted domain $a^*\ge 1$, the computation occurs at input-independent speed.  We record this as a corollary below.

\begin{corollary}[System~1r at input-independent speed]
\label{cor:system1r}
System~1r with $a(t)\ge 1$ for all $t\ge 0$, $a(t)\to a^*$ at rate at least $\rho_a$, $z(0)\ge 1$, and $x(0)=\ln z(0)$ computes $\ln a^*$ for all $a^*\ge 1$ at rate at least $\min\{\rho_a,1\}$.
\end{corollary}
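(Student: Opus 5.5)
The plan has three steps: establish the invariant $x(t)=\ln z(t)$, obtain the rate for $z$, then transfer that rate to $x$. Throughout I rely on Lemma~\ref{thm:system2_analysis} (with $g_1(t)=a(t)$, $g_2(t)\equiv 1$, $m=1$) and on Lemma~\ref{lem:lipschitz}, both cited earlier in the paper.

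\emph{Step 1: the invariant.} Since $z(0)\ge 1>0$ and $\dot z=z(a-z)$, the solution satisfies $z(t)>0$ wherever it exists. Moreover $z(t)\le \max\{z(0),\sup_s a(s)\}$, which is finite because $a(t)$ converges. Hence the solution is global. On $[0,\infty)$ we have
\[
\frac{d}{dt}\ln z(t)=a(t)-z(t)=\dot x(t),
\]
so $\ln z(t)-x(t)$ is conserved. The assumption $x(0)=\ln z(0)$ then gives $x(t)=\ln z(t)$ for all $t\ge0$. The comparison argument in the text shows $z(t)\ge 1$ for all $t$, so $x(t)\ge 0$. Nonnegativity is what makes the restricted interpretation meaningful, although the convergence argument does not need it.

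\emph{Step 2: rate for $z$.} Lemma~\ref{thm:system2_analysis} gives $z(t)\to a^*$ at rate at least $\min\{\rho_a,a^*\}$. Since $a^*\ge1$, this is at least $\min\{\rho_a,1\}$.

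If that lemma were unavailable, I would argue directly with $w=\ln z-\ln a^*$. It satisfies
\[
\dot w = -z\,(1-a^*/z)\cdot a^*\cdots
\]
More simply, I would use $u=z-a^*$, which satisfies
\[
\dot u=-z\,u+z\,(a(t)-a^*).
\]
Because $z(t)\ge1$ on the invariant region, variation of constants gives the bound
\[
|u(t)|\le e^{-t}|u(0)|+\int_0^t e^{-(t-s)}\,C\,|a(s)-a^*|\,ds.
\]
Here $C$ bounds $z$. Splitting the integral at a time $T_\eta$ beyond which $|a(s)-a^*|\le C_\eta e^{-(\rho_a-\eta)s}$ yields decay at rate at least $\min\{\rho_a,1\}-\eta$ for every $\eta>0$. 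This direct estimate is the only real work. The key point is that the lower bound $z\ge1$ supplies a uniform contraction rate independent of $a^*$.

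\emph{Step 3: transfer to $x$.} The logarithm is Lipschitz on $[1,\infty)$ with constant $1$. Since $z(t)\ge 1$ and $a^*\ge 1$, we get
\[
|x(t)-\ln a^*|\le|z(t)-a^*|.
\]
Alternatively, Lemma~\ref{lem:lipschitz} applies to the locally Lipschitz function $\ln$ near $a^*$. Either way, $x(t)\to\ln a^*$ at rate at least $\min\{\rho_a,1\}$, uniformly over all $a^*\ge1$.
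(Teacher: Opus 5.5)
Your proposal is correct and follows the paper's own argument. The conserved quantity $x-\ln z$ gives $x(t)=\ln z(t)$; Lemma~\ref{thm:system2_analysis} with $g_1=a$, $g_2\equiv 1$, $m=1$ gives $z(t)\to a^*$ at rate at least $\min\{\rho_a,a^*\}\ge\min\{\rho_a,1\}$; and the Lipschitz property of $\ln$ transfers this rate to $x$. Your direct variation-of-constants estimate using $z(t)\ge 1$ is a valid self-contained substitute for citing the lemma, but you should delete the abandoned line about $\dot w$.
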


\subsection{A full-domain non-mass-action variant}

The restricted-domain system (System~1r) achieves input-independent speed, but only for inputs satisfying $a^*\ge 1$. We now show that this domain restriction can be removed while preserving  a uniform lower bound on the convergence rate. The resulting construction is still not mass-action, but it provides a simple polynomial system that computes $\ln a^*$ for all $a^*>0$ at input-independent speed.
The key mechanism is borrowed from \cite{anderson2025arithmetic}: the logistic-type ODE $\dot y = y(1-a(t)y)$ computes the reciprocal $1/a^*$ at input-independent speed, because the extra factor of $y$ ensures the convergence rate has a uniform positive lower bound independent of the input. This device was used repeatedly in \cite{anderson2025arithmetic} to build arithmetic modules; here we use it as the foundation for an input-independent logarithm module.

\begin{lemma}
\label{lem:system2_log}
   Consider the following non-mass-action system, where $a(t)$ converges to a positive constant $a^*$ at rate at least $\rho_a$:
\begin{equation} \tag{System 2} \label{eq:polyforlog_ii}
    \begin{aligned}
    \dot y = y\left(1-a(t) y\right),  \qquad \dot x = 1-yz, \qquad \dot z = z(1-yz), 
\end{aligned}
\end{equation}
with initial values satisfying $y(0) > 0$, $z(0) > 0$, and $x(0) = \ln z(0)$. 
Then $x(t) \to \ln a^*$ at a rate of at least $\min\{\rho_{a},1\}$. 
\end{lemma}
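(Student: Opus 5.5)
First I would reduce the problem to the convergence of $z$ by finding a conserved quantity, exactly as for System~1 and the exponential module. Since $\dot z = z(1-yz)$ and $z(0)>0$, we have $z(t)>0$ for all $t$, and
\[
\frac{d}{dt}\ln z(t) = 1-y(t)z(t) = \dot x(t).
\]
The condition $x(0)=\ln z(0)$ then gives $x(t)=\ln z(t)$ for all $t\ge 0$. As $\ln$ is locally Lipschitz on $(0,\infty)$, Lemma~\ref{lem:lipschitz} will give the claim once I show that $z(t)\to a^*$ at rate at least $\min\{\rho_a,1\}$ and that $z$ stays in a compact subset of $(0,\infty)$ for large $t$.

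Next I treat the $y$-equation, which is decoupled. The equation $\dot y = y(1-a(t)y)$ is the reciprocal device of \cite{anderson2025arithmetic}. I would obtain $y(t)\to 1/a^*$ at rate at least $\min\{\rho_a,1\}$ from Lemma~\ref{thm:system2_analysis} with $g_1\equiv 1$, $g_2=a(t)$ and $m=1$, or equivalently from the time-dependent inversion result of \cite{anderson2025arithmetic}. With $y(0)>0$, $y$ stays positive and is bounded above and below by positive constants for large $t$.

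Finally I treat $z$. The equation $\dot z = z(1-y(t)z)$ has the same form, with time-varying coefficient $y(t)\to 1/a^*$. Applying the same lemma with input $y(t)$ in place of $a(t)$ gives $z(t)\to 1/(1/a^*)=a^*$ at rate at least $\min\{\min\{\rho_a,1\},1\}=\min\{\rho_a,1\}$. The lemma's point is that the linearized rate at the limit is $g_1=1$, independent of the coefficient, so no dependence on $a^*$ enters.

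The main obstacle is to make sure the cited lemma really gives this uniform rate for a time-varying coefficient. Concretely, this means checking its hypotheses (positivity and convergence rate of the coefficient) and that its rate does not secretly degrade with $a^*$. If those hypotheses are not directly available, I would prove it by hand via $w=1/z$. This substitution gives the linear equation
\[
\dot w = -w + y(t),
\]
so
\[
w(t)=w(0)e^{-t}+\int_0^t e^{-(t-s)}y(s)\,ds .
\]
Standard convolution estimates then show $w\to 1/a^*$ at rate at least $\min\{\rho_y,1\}$, and Lemma~\ref{lem:lipschitz} transfers this rate back to $z=1/w$. The same substitution $u=1/y$, giving $\dot u = -u+a(t)$, handles the $y$-equation in the same way.
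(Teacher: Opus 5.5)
Your proposal is correct and follows essentially the same route as the paper: the invariant $x(t)=\ln z(t)$, two applications of Lemma~\ref{thm:system2_analysis} with $g_1\equiv 1$ and $m=1$ (first with $g_2=a$, then with $g_2=y$), and Lemma~\ref{lem:lipschitz} to pass from $z$ to $x=\ln z$. Your fallback via $w=1/z$ is also valid but not needed; it turns the $z$-equation into the linear equation $\dot w=y(t)-w$, which Lemma~\ref{thm:system1_analysis} covers.
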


\begin{proof}
Applying Lemma~\ref{thm:system2_analysis} to $\dot y = y(1-a(t)y),$ with $g_1(t)\equiv 1$, $g_2(t)=a(t)$, and $m=1$, we obtain $y(t)\to \frac{1}{a^*}$
at rate at least $\min\{1,\rho_a\}$.

Applying the same lemma to $\dot z = z(1-y(t)z),$ with $g_1(t)\equiv 1$, $g_2(t)=y(t)$, and $m=1$, we obtain
$z(t)\to \frac{1}{y^*}=a^*$
at rate at least $\min\{1,\rho_a\}$.

Since
\[
\dot x = 1-yz = \frac{\dot z}{z},
\]
integration gives
\[
x(t)-x(0)=\ln z(t)-\ln z(0).
\]
Thus, if $x(0)=\ln z(0)$, then
\[
x(t)=\ln z(t)
\qquad\text{for all } t\ge 0.
\]
Finally, since $\ln$ is locally Lipschitz near $a^*>0$, Lemma~\ref{lem:lipschitz} implies that
\[
x(t)=\ln z(t)\to \ln a^*
\]
at rate at least $\min\{1,\rho_a\}$.
\end{proof}

\subsection{A first mass-action variant}

We now modify the previous construction so that it satisfies mass-action kinetics. The idea is simple: in System~1, the obstruction to mass-action was the missing factor of $x$ in the negative term of $\dot x$. Multiplying the vector field by this missing factor yields the following system:
\begin{align} \tag{System 3}  \label{eq:massactionforlog}
    \dot{z}(t)=(a(t)-z)xz, \qquad
    \dot{x}(t)=(a(t)-z)x.
\end{align}
The corresponding reaction network is
\begin{align*}
    A+Z+X\to A+2Z+X,\quad &2Z+X\to Z+X,\\
    A+X\to A+2X,\quad &Z+X\to Z.
\end{align*}
As before, we impose the compatibility condition $x(0)=\ln z(0)$.

Suppose first that $a(t)\to a^*\ge 1$. Since
\[
\dot x(t)=\frac{\dot z(t)}{z(t)}=\frac{d}{dt}\ln z(t),
\]
integration gives
\[
x(t)-x(0)=\ln z(t)-\ln z(0).
\]
Thus, if $x(0)=\ln z(0)$, then
\[
x(t)=\ln z(t)
\qquad\text{for all } t\ge 0.
\]
In particular, if $z(t)\to a^*$, then $x(t)\to \ln a^*$.

Now consider the case $a^*<1$, assuming for concreteness that $z(0)\ge 1$, so that $x(0)=\ln z(0)\ge 0$. As long as $x(t)>0$, the identity $x(t)=\ln z(t)$ remains valid, and the dynamics drive $z(t)$ downward toward the level $z=1$, where $x=\ln 1=0$. At that point both right-hand sides vanish, so the trajectory becomes trapped at $(z,x)=(1,0)$. Thus, for inputs $a^*\in(0,1)$, the system does not compute $\ln a^*$; instead, it computes the rectified logarithm
\[
\max\{\ln a^*,0\}=0.
\]
Accordingly, System~3 is restricted to the domain $a^*\ge 1$ if one wants the correct logarithmic output.

Even on the restricted domain $a^*>1$, the convergence speed still depends on the input and therefore has no uniform positive lower bound. This is already visible in the constant-input case.

\begin{lemma}
\label{lem:massaction_log_const}
The mass-action system \eqref{eq:massactionforlog} with $a(t)\equiv a>1$, $z(0)\ge a$, and $x(0)=\ln z(0)$ converges to $(z^*,x^*)=(a,\ln a)$ at rate  exactly $a\ln a$ provided $z(0)\ne a$; if $z(0)=a$, the trajectory is already at equilibrium.
\end{lemma}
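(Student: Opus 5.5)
The plan is to reduce System~3 to a single scalar ODE for $z$ by using the conserved relation, and then linearize at the equilibrium.

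\textbf{Step 1: reduce to one equation.} As shown just above, the compatibility condition $x(0)=\ln z(0)$ gives $x(t)=\ln z(t)$ for as long as the solution exists. Substituting this into the equation for $z$ yields the autonomous scalar ODE
\[
\dot z = (a-z)\,z\ln z =: F(z).
\]

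\textbf{Step 2: show $z$ decreases monotonically to $a$.} On $(1,\infty)$ the function $F$ is smooth, and it has a simple zero at $z=a$. For $z>a>1$ we have $F(z)<0$. Hence a solution starting at $z(0)>a$ decreases monotonically, stays above $a$ by uniqueness, and converges to $a$. If $z(0)=a$, the trajectory is already at equilibrium, which is the second claim. Because $z(t)\ge a>1$ throughout, we also get $x(t)=\ln z(t)\ge \ln a>0$, so the mass-action system stays in the positive region and the substitution in Step~1 is justified for all $t\ge0$.

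\textbf{Step 3: get a two-sided exponential bound.} Compute the linearization
\[
F'(a) = -a\ln a<0 .
\]
Set $\lambda=a\ln a$ and $w=z-a>0$. Then
\[
\dot w = -\lambda w + O(w^2).
\]
To convert this into matching upper and lower bounds, write $F(z) = -(z-a)h(z)$ with $h(z)=z\ln z$. The function $h$ is increasing and continuous on $[a,z(0)]$ with $h(a)=\lambda$. Therefore
\[
\frac{d}{dt}\ln w = -h(z(t)).
\]
Integrating gives
\[
\ln w(t) = \ln w(0) - \lambda t - \int_0^t \bigl(h(z(s))-\lambda\bigr)\,ds .
\]
The main obstacle is showing that this integral stays bounded, which is what is needed to get rate exactly $\lambda$ rather than merely $\lambda-\eta$. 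Since $h$ is $C^1$ near $a$, we have
\[
0\le h(z(s))-\lambda\le C\,w(s).
\]
Because $h\ge\lambda$, we also have $w(s)\le w(0)e^{-\lambda s}$. So the integrand is bounded by $Cw(0)e^{-\lambda s}$, which is integrable on $[0,\infty)$. This yields
\[
c_1e^{-\lambda t}\le w(t)\le c_2 e^{-\lambda t}
\]
with $c_1=w(0)e^{-Cw(0)/\lambda}$ and $c_2=w(0)$. Hence $z\to a$ at rate exactly $a\ln a$.

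\textbf{Step 4: transfer the rate to $x$.} Since $x=\ln z$ and $\ln$ is $C^1$ near $a$ with nonzero derivative $1/a$, the mean value theorem gives
\[
|x(t)-\ln a|\asymp |z(t)-a|.
\]
The two-sided bounds therefore carry over, exactly as in the proof of Lemma~\ref{lem:poly_log_speed}. So $x(t)\to\ln a$ at rate exactly $a\ln a$ as well.
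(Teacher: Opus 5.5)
Your proof is correct and follows essentially the same route as the paper: reduce to $\dot z=(a-z)z\ln z$ via $x=\ln z$, use the invariance $z(t)\ge a$ to get $w(t)\le w(0)e^{-a\ln a\,t}$, use the identity $\frac{d}{dt}\ln w=-z\ln z$ for exactness, and transfer the rate to $x$ by the mean value theorem. The only difference is how you conclude exactness. The paper takes the Ces\`aro limit $\frac1t\int_0^t z\ln z\,ds\to a\ln a$, which already gives the exact rate under Definition~\ref{def:speed}, so bounding your integral is not actually needed. You instead bound $\int_0^\infty (h(z)-\lambda)\,ds\le Cw(0)/\lambda$, which gives the slightly stronger two-sided estimate $c_1e^{-\lambda t}\le w(t)\le c_2e^{-\lambda t}$ with explicit constants.
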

\begin{proof}
Since
\[
\dot x=\frac{\dot z}{z},
\]
the condition $x(0)=\ln z(0)$ implies
\[
x(t)=\ln z(t)
\qquad\text{for all }t\ge 0.
\]
Thus it suffices to study the scalar equation
\[
\dot z=(a-z)z\ln z.
\]
Because $z(0)\ge a>1$, the trajectory satisfies $z(t)\ge a$ for all $t\ge 0$, and hence
\[
z(t)\ln z(t)\ge a\ln a.
\]
Therefore
\[
\dot z = -(z-a)\,z\ln z \le -(a\ln a)(z-a).
\]
By Gronwall's inequality,
\[
0\le z(t)-a \le (z(0)-a)e^{-a\ln (a)\,t}.
\]
Since $\ln$ is locally Lipschitz near $a>1$, Lemma~\ref{lem:lipschitz} then yields
\[
x(t)=\ln z(t)\to \ln a
\]
at rate at least $a\ln (a)$.
To see that this rate is exact when $z(0)\ne a$, set $u(t)=z(t)-a$. Then
\[
\frac{d}{dt}\ln|u(t)|=-z(t)\ln z(t),
\]
so, since $z(t)\to a$,
\[
-\lim_{t\to\infty}\frac{1}{t}\ln|u(t)|
=\lim_{t\to\infty}\frac1t\int_0^t z(s)\ln z(s)\,ds
=a\ln a.
\]
By the mean value theorem, $x(t)-\ln a=(z(t)-a)/\xi(t)$ for some $\xi(t)$ between $z(t)$ and $a$. Since $\xi(t)\to a>0$,
\[
-\frac{1}{t}\ln|x(t)-\ln a|
=-\frac{1}{t}\ln|z(t)-a|+\frac{1}{t}\ln\xi(t)
\longrightarrow a\ln a,
\]
so $x(t)\to\ln a$ at the same asymptotic rate.
\end{proof}

Since $a\ln a \to 0$ as $a \to 1^+$,  the exact constant-input rate tends to zero, and hence there is no positive lower bound on the convergence speed that is uniform over all $a>1$.

We next consider the case of time-varying input. The corresponding rate estimate will be used later in the analysis of Systems~5 and~6.

\begin{lemma}[Speed of System~3 under a uniform lower bound]
\label{lem:massaction_log_speed}
Consider the mass-action system \eqref{eq:massactionforlog}, and assume that $a(t)\to a^*>1$ at rate at least $\rho_a>0$. Suppose there exists a constant $c>1$ such that
\[
a(t)\ge c \qquad \text{for all } t\ge 0,
\]
and assume the initial conditions satisfy
\[
z(0)\ge c,
\qquad
x(0)=\ln z(0)>0.
\]
Then $(z(t),x(t))\to (a^*,\ln a^*)$ at rate at least
\[
\min\{\rho_a,\; c\ln c\}.
\]
\end{lemma}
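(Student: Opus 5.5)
The plan is to reduce System~3 to a scalar non-autonomous equation for $z$, exactly as in the constant-input Lemma~\ref{lem:massaction_log_const}. Since $\dot x=\dot z/z$, the compatibility condition $x(0)=\ln z(0)$ forces $x(t)=\ln z(t)$ for all $t\ge 0$, as long as $z$ stays positive. The $z$-equation then becomes
\[
\dot z=(a(t)-z)\,z\ln z .
\]
Once $z(t)\to a^*$ is established at rate at least $\min\{\rho_a,c\ln c\}$, Lemma~\ref{lem:lipschitz} applied to $\ln$, which is locally Lipschitz near $a^*>1$, transfers the rate to $x(t)$. Hence the whole proof is about $z$.

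\textbf{Step 1 (invariance).} I will show $z(t)\ge c$ for all $t\ge 0$. If $z(t)=c$, then $\dot z=(a(t)-c)\,c\ln c\ge 0$ because $a(t)\ge c$. A comparison argument, the same one used for System~1r, then keeps the trajectory in $[c,\infty)$.

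\textbf{Step 2 (upper bound).} I also need $z$ bounded above. The key observation is that $z\le \max\{z(0),\sup_t a(t)\}=:M$. This holds because $\dot z<0$ whenever $z>a(t)$. The supremum is finite since $a(t)$ converges.

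\textbf{Step 3 (linear error equation).} Set $u=z-a^*$. Then
\[
\dot u=-z\ln z\,u+(a(t)-a^*)\,z\ln z .
\]
On $[c,M]$ the coefficient satisfies $z\ln z\ge c\ln c$ and $z\ln z\le M\ln M$. Variation of constants gives
\[
|u(t)|\le |u(0)|e^{-c\ln c\,t}+M\ln M\int_0^t e^{-c\ln c\,(t-s)}|a(s)-a^*|\,ds .
\]
Next I insert the bound $|a(s)-a^*|\le C_\eta e^{-(\rho_a-\eta)s}$ for $s\ge T_\eta$, and a uniform bound on $[0,T_\eta]$. The convolution of $e^{-\alpha t}$ and $e^{-\beta t}$ is bounded by a constant times $e^{-(\min\{\alpha,\beta\}-\eta)t}$. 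This yields rate at least $\min\{\rho_a,c\ln c\}$. I expect the paper's Lemma~\ref{thm:system1_analysis}, with $g_1=a(t)$ and $g_2\equiv1$, to encapsulate this linear estimate. If so, I would apply it along the trajectory by viewing $z\ln z$ as a known time-dependent weight bounded below by $c\ln c$. If its hypotheses do not fit, I would just do the Gronwall computation above directly.

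\textbf{Main obstacle.} The delicate point is the uniform lower bound $z\ln z\ge c\ln c$. It requires the invariance $z\ge c$ to hold even though $a(t)$ may temporarily exceed $a^*$ or vary in time. The hypothesis $a(t)\ge c$ is exactly what makes Step~1 work, and the rest is a routine exponential-convolution estimate.
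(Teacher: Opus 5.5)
Your proposal is correct and follows essentially the same route as the paper: you reduce to $\dot z=(a(t)-z)z\ln z$ via $x=\ln z$, establish the invariance $z\ge c$ and an upper bound so that $c\ln c\le z\ln z\le K\ln K$, bound the error $u=z-a^*$ by a Gronwall/convolution argument, and transfer the rate to $x$ by Lemma~\ref{lem:lipschitz}. The only cosmetic differences are that the paper uses the differential inequality $\frac{d}{dt}|u|\le -m|u|+M|\varepsilon|$ instead of the variation-of-constants formula, and it does not invoke Lemma~\ref{thm:system1_analysis}, whose hypotheses indeed would not fit directly here, as you suspected.
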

\begin{proof}
As established above, the condition $x(0)=\ln z(0)$ implies
\[
x(t)=\ln z(t)
\qquad \text{for all } t\ge 0.
\]
Hence it suffices to study the scalar equation
\[
\dot z=(a(t)-z)\,z\ln z.
\]

Set
\[
u(t)=z(t)-a^*,
\qquad
\varepsilon(t)=a(t)-a^*.
\]
 Fix any $\eta\in(0,\rho_a)$. By Definition~\ref{def:speed}, there exist $C_\eta>0$ and $T_\eta\ge0$ such that
$|\varepsilon(t)|\le C_\eta e^{-(\rho_a-\eta)t}$ for all $t\ge T_\eta$. Then
\[
\dot u
=
(a(t)-z)\,z\ln z
=
(\varepsilon(t)-u(t))\,z(t)\ln z(t).
\]

We first note that $z(t)\ge c$ for all $t\ge 0$. Indeed, if at some time $z(t)=c$, then
\[
\dot z(t)=(a(t)-c)\,c\ln c \ge 0,
\]
since $a(t)\ge c$. Thus the trajectory cannot cross below $c$.

Next, since $a(t)\to a^*$, the function $a(t)$ is bounded, and therefore $z(t)$ is also bounded above: if $z(t)$ exceeds $\max\{z(0),\sup_{s\ge0}a(s)\}$, then $\dot z(t)<0$. Hence there exists $K<\infty$ such that
\[
c\le z(t)\le K
\qquad \text{for all } t\ge 0.
\]
It follows that
\[
c\ln c \le z(t)\ln z(t)\le K\ln K
\qquad \text{for all } t\ge 0.
\]

Therefore, writing
\[
m:=c\ln c,
\qquad
M:=K\ln K,
\]
we obtain
\begin{align*}
\frac{d}{dt}|u(t)|
&= \sgn(u(t))\,\dot u(t)
\quad\text{(since $\tfrac{d}{dt}|u|=\sgn(u)\dot u$ a.e.\ for absolutely continuous $u$)}\\
&= \sgn(u(t))\,(\varepsilon(t)-u(t))\,z(t)\ln z(t)\\
&\le -m\,|u(t)| + M\,|\varepsilon(t)|\\
&\le -m\,|u(t)| + MC_\eta\,e^{-(\rho_a-\eta)t},\qquad t\ge T_\eta.
\end{align*}

 A standard Gronwall/comparison argument applied from $T_\eta$  shows that $u(t)$ converges to zero at rate at least $\min\{\rho_a-\eta,m\}$. Since $\eta\in(0,\rho_a)$ is arbitrary, the asymptotic rate is at least $\min\{\rho_a,m\}$. Hence
\[
z(t)\to a^*
\]
at rate at least $\min\{\rho_a,c\ln c\}$.

Finally, since $x(t)=\ln z(t)$ and $\ln$ is locally Lipschitz near $a^*>1$, Lemma~\ref{lem:lipschitz} implies that
\[
x(t)\to \ln a^*
\]
at rate at least $\min\{\rho_a,c\ln c\}$.
\end{proof}

\begin{corollary}[System~3 with an eventual lower bound]
\label{lem:massaction_log_eventual}
Suppose the input to System~3 satisfies $a(t)\to a^*\ge e$ at rate at least $\rho_a>0$, and assume $z(0)>1$ and $x(0)=\ln z(0)$. Then
$(z(t),x(t))\to(a^*,\ln a^*)$ at rate at least $\min\{\rho_a,1\}$.
\end{corollary}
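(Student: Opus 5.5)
The plan is to reduce to Lemma~\ref{lem:massaction_log_speed} by a time shift. That lemma requires a lower bound $a(t)\ge c>1$ for \emph{all} $t\ge0$ and $z(0)\ge c$. Here we only know $a(t)\to a^*\ge e$ and $z(0)>1$. The asymptotic rate in Definition~\ref{def:speed} is a $\limsup$ as $t\to\infty$, so it is unchanged if we restart the clock at a finite time $T$. It therefore suffices to find a finite $T$ after which the hypotheses of the lemma hold with some constant $c$ satisfying $c\ln c\ge 1$.

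\textbf{Choice of $c$.} Take $c=2$. Then $c\ln c=2\ln 2\approx1.386>1$, and $c$ is strictly less than $e\le a^*$. Choose an intermediate level $c_1=5/2$, so that $2<c_1<e\le a^*$. Since $a(t)\to a^*>c_1$, there is $T_1$ with $a(t)\ge c_1$ for all $t\ge T_1$.

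\textbf{Preliminary invariance.} As in the proof of Lemma~\ref{lem:massaction_log_speed}, the condition $x(0)=\ln z(0)$ gives $x(t)=\ln z(t)$ for all $t$. Hence $z$ solves the scalar equation
\[
\dot z=(a(t)-z)\,z\ln z .
\]
The level $z\equiv1$ is a solution of this equation regardless of $a(t)$. By uniqueness, $z(0)>1$ implies $z(t)>1$ for all $t$, and in particular $z(T_1)>1$. Boundedness of $z$ from above follows exactly as in the lemma.

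\textbf{Main step: entering $[2,\infty)$ in finite time.} This is the only real obstacle, because $z$ may have dropped close to $1$ before time $T_1$.
\begin{itemize}
\item If $z(T_1)\ge 2$: the argument in the lemma (at $z=2$ we have $\dot z\ge0$ since $a\ge c_1>2$) shows $z(t)\ge2$ for all $t\ge T_1$. Set $T=T_1$.
\item If $z(T_1)\in(1,2)$: while $z(t)\in[z(T_1),2]$ and $t\ge T_1$, we have
\[
\dot z\ge (c_1-2)\,z(T_1)\ln z(T_1)=:\delta>0 .
\]
So $z$ reaches $2$ by time $T:=T_1+(2-z(T_1))/\delta<\infty$, and by the same invariance it stays $\ge2$ afterwards.
\end{itemize}
Using the gap $c_1>c$ is essential here. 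With only $a\ge c$, the factor $(c-z)$ vanishes at $z=c$, and $z$ would approach $c$ only asymptotically rather than reach it in finite time.

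\textbf{Conclusion.} Define the shifted functions $\tilde a(s)=a(T+s)$, $\tilde z(s)=z(T+s)$ and $\tilde x(s)=x(T+s)$. Then $\tilde a(s)\ge 2$ for all $s\ge0$, and $\tilde a\to a^*>1$ at rate at least $\rho_a$, since a shift does not change the rate. We also have $\tilde z(0)\ge2$ and $\tilde x(0)=\ln\tilde z(0)>0$. Lemma~\ref{lem:massaction_log_speed} with $c=2$ gives convergence of $(\tilde z,\tilde x)$ to $(a^*,\ln a^*)$ at rate at least
\[
\min\{\rho_a,2\ln2\}\ge\min\{\rho_a,1\}.
\]
Undoing the shift preserves this rate, which proves the corollary.
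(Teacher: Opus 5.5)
Your proposal is correct and takes essentially the same route as the paper's proof. You pick a level strictly between $2$ and $a^*$ (the paper uses $2+\eta$, you use $5/2$), and you use $z(t)>1$ from the invariant $x=\ln z$ to bound $\dot z$ below while $z<2$, so that $z$ reaches $2$ in finite time; you then apply Lemma~\ref{lem:massaction_log_speed} with $c=2$ from that time on to get $\min\{\rho_a,2\ln 2\}\ge\min\{\rho_a,1\}$, and your remarks on time-shift invariance of the rate and on uniqueness keeping $z>1$ only make explicit what the paper leaves implicit.
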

\begin{proof}
Since $a^*\ge e>2$, there exist $T_0$ and $\eta>0$ such that
$a(t)\ge2+\eta$ for all $t\ge T_0$. The invariant relation
$x(t)=\ln z(t)$ and $z(0)>1$ imply $z(t)>1$ for all $t$. As long as
$z(t)<2$ for $t\ge T_0$,
\[
\dot z(t)\ge \eta z(t)\ln z(t)
\ge \eta z(T_0)\ln z(T_0)>0,
\]
so $z$ reaches $2$ in finite time. Thus, for some $T_1\ge T_0$,
$z(T_1)\ge2$. Applying Lemma~\ref{lem:massaction_log_speed} from
$T_1$ onward with $c=2$ gives rate at least
\[
\min\{\rho_a,2\ln2\}\ge\min\{\rho_a,1\}.
\]
\end{proof}

\subsection{A dual-rail mass-action system on the full positive domain}

We now remove the domain restriction of System~3 while remaining within mass-action kinetics. 
We  use the dual-rail representation introduced in Section~\ref{sec:framework}. Thus, instead of a single output species $X$, we use a pair $(X_p,X_n)$, with intended output
\[
x \coloneqq x_p-x_n=\ln a^*.
\]
In this encoding, $x_p$ records the positive part of the logarithm, while $x_n$ records the magnitude of the negative part.

The positive rail is obtained directly from System~3:
\begin{align} \tag{System 4p} \label{eq:massactionforlog_pos}
    \dot{z}(t)=(a(t)-z)x_pz, \qquad
    \dot{x}_p(t)=(a(t)-z)x_p.
\end{align}
Assuming the compatibility condition $x_p(0)=\ln z(0)$, we have
\[
x_p(t)=\ln z(t)
\qquad\text{for as long as }x_p(t)>0.
\]
Thus, if $a^*\ge 1$, then $z(t)\to a^*$ and hence
\[
x_p(t)\to \ln a^*.
\]
If instead $a^*<1$, then the trajectory is driven to the boundary value $z=1$, yielding
\[
x_p(t)\to 0.
\]
Accordingly, the positive rail computes the positive part of the logarithm.

To obtain the negative rail, we apply the same mechanism to the reciprocal target $1/a^*$. Specifically, consider
\begin{align} \tag{System 4n} \label{eq:massactionforlog_neg}
    \dot{y}(t)=(1-a(t)y)x_n y, \qquad
    \dot{x}_n(t)=(1-a(t)y)x_n.
\end{align}
We assume throughout that $y(0)>1$ and $x_n(0)=\ln y(0)$. Since
\[
\frac{\dot y(t)}{y(t)}=(1-a(t)y)x_n=\dot x_n(t),
\]
it follows that
\[
x_n(t)=\ln y(t)
\qquad\text{for all } t\ge 0.
\]
Thus the trajectory remains on the invariant curve $x_n=\ln y$, and the dynamics reduce to
\[
\dot y=(1-a(t)y)y\ln y.
\]

This is the analog of System~3 applied to the reciprocal target $1/a(t)$. Indeed, writing $b(t)=1/a(t)$, we have
\[
\dot y=a(t)\,(b(t)-y)\,y\ln y,
\]
which differs from the reduced form of System~3 only by the positive factor $a(t)$ and therefore has the same qualitative behavior. Consequently, if $a^*<1$, then $1/a^*>1$, so $y(t)\to 1/a^*$ and hence
\[
x_n(t)=\ln y(t)\to \ln(1/a^*)=-\ln a^*.
\]
If instead $a^*\ge 1$, then $1/a^*\le 1$, and since $y(t)\ge 1$ along the invariant curve, the trajectory is driven to the boundary value $y=1$, yielding $x_n(t)\to 0$.

For constant input $a\in(0,1)$ and a non-equilibrium compatible initialization, the convergence rate of the negative rail is exactly
\[
-\ln a=\ln(1/a).
\]
Indeed, with $y^*=1/a$ and $u(t)=y(t)-y^*$,
\[
\frac{d}{dt}\ln|u(t)|=-a\,y(t)\ln y(t),
\]
and therefore the asymptotic rate is $a y^*\ln y^*=\ln(1/a)$. Thus, if the reciprocal target satisfies $1/a\ge c>1$, the constant-input rate is bounded below by $\ln c$.

Combining \eqref{eq:massactionforlog_pos} with \eqref{eq:massactionforlog_neg} yields the dual-rail logarithm:
\begin{itemize}
    \item When $a^* \ge 1$: $x_p \to \ln a^*$ and $x_n \to 0$, so $x_p - x_n \to \ln a^*$.
    \item When $a^* < 1$: $x_p \to 0$ and $x_n \to -\ln a^*$, so $x_p - x_n \to \ln a^*$.
\end{itemize}
We refer to this combined system as (System 4). The issue of input-dependent speed, however, persists. In the following subsections we show how composing these modules with the arithmetic modules of~\cite{anderson2025arithmetic} resolves this difficulty, yielding two further constructions---Systems~5 and~6---that achieve input-independent speed.

\subsection{Composition of logarithm modules with arithmetic operations}
\label{sec:composite}

The fundamental mechanism behind  the composition results below is the  time-dependent-input rate guarantee for each component module: when its inputs are themselves converging, the module's output converges at a rate bounded below by the minimum of the input rates and the module's intrinsic speed. Crucially, this holds even in \emph{series} composition: the downstream module begins running from $t=0$ with a time-varying input $u(t)\to u^*$, without waiting for $u(t)$ to converge. For example, to compute $\ln(a+b)$, the addition module produces $z(t)\to a^*+b^*$ while the logarithm module simultaneously receives $z(t)$ as input and produces $\ln z(t)\to\ln(a^*+b^*)$---both running from $t=0$. As a result,  the convergence rate of the composite computation is controlled only by the external-input rates and the minimum intrinsic speed among the component modules (Theorem~\ref{thm:composite_log}).

We also recall from Remark~\ref{rem:init} that the exponential and logarithm modules require specific initialization conditions; in any composition, these must be satisfied for each component module independently.

By a \emph{composite computation} we mean a finite feedforward directed acyclic graph
whose non-source nodes are modules and whose source nodes receive the external inputs of the computation.
We refer to such a graph as the \emph{computational graph}.
The following theorem shows that input-independent speed is preserved by any computational graph.

\begin{theorem}[Composition of logarithm and arithmetic modules]
\label{thm:composite_log}
Consider a computation composed from a finite number of modules drawn from the following:
\begin{enumerate}
    \item the arithmetic modules of~\cite{anderson2025arithmetic} (identification, inversion, $m$th roots, addition, multiplication, absolute difference, rectified subtraction);
    \item the nonnegative exponential module (Theorem~\ref{thm:exponential}) and the real exponential module (Theorem~\ref{thm:real_exponential});
    \item any of the logarithm modules of Systems~1--4 (Lemma~\ref{lem:poly_log_speed}, Corollary~\ref{cor:system1r}, Lemma~\ref{lem:system2_log}, and the constructions of Section~\ref{sec:log}).
\end{enumerate}
Suppose that each component module has a positive intrinsic speed, all rate constants equal~$1$, the external inputs to the composite computation converge at rate at least $\rho>0$, and all admissible initialization conditions are satisfied. Then the output of the composite computation converges to the correct value at rate at least $\min\{\rho,\rho_{\min}\}$, where $\rho_{\min}$ is the minimum intrinsic speed  among the component modules.
\end{theorem}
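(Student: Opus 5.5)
The plan is induction over a topological ordering of the computational graph. Since the graph is a finite directed acyclic graph, its nodes can be listed as $v_1,\dots,v_N$ so that every edge points from an earlier node to a later one. The sources come first; they carry the external inputs, which converge at rate at least $\rho$ by hypothesis. The claim to prove by induction is: \emph{every signal produced at node $v_k$ converges to its correct value at rate at least $\min\{\rho,\rho_{\min}\}$.} Here the correct value means the function of the external limits obtained by evaluating the subgraph feeding into $v_k$.

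The base case is the sources and is immediate. For the inductive step, take a module node $v_k$. Its input trajectories $u_1(t),\dots,u_j(t)$ are outputs of earlier nodes. By the induction hypothesis, each converges to its limit $u_i^*$ at rate at least $\min\{\rho,\rho_{\min}\}$. We then invoke the time-dependent-input rate guarantee of the module at $v_k$, in the sense of Definition~\ref{def:computation}. The relevant sources are:
\begin{itemize}
\item Theorems~\ref{thm:exponential} and~\ref{thm:real_exponential} for the exponential modules;
\item Lemma~\ref{lem:system2_log}, Corollary~\ref{cor:system1r}, and the analogous statements for Systems~1, 3 and~4 (via Lemma~\ref{thm:system2_analysis} and Lemma~\ref{lem:massaction_log_speed}) for the logarithm modules;
\item the corresponding results of~\cite{anderson2025arithmetic} for the arithmetic modules.
\end{itemize}
Each guarantee says the module output converges to $f(u^*)$ at rate at least $\min\{\rho_1,\dots,\rho_j,\rho_{v_k}\}$, where $\rho_{v_k}\ge\rho_{\min}$ is the module's intrinsic speed. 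With every $\rho_i\ge\min\{\rho,\rho_{\min}\}$, this minimum is at least $\min\{\rho,\rho_{\min}\}$, which closes the induction. The output node then gives the theorem.

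Two bookkeeping points need care. First, the admissible initialization of each module, such as $\ln x(0)=z(0)$ or $x(0)=\ln z(0)$, is a condition only on that module's own intermediate and output species. It is therefore unaffected by upstream dynamics, and the invariant identities (for instance $x(t)=\ln z(t)$) hold for all $t\ge0$ whatever the input trajectory is. Second, the limits $u_i^*$ must lie in the domain on which the module's guarantee applies. Examples are $a^*>0$ for a logarithm and $a^*\ge1$ with $a(t)\ge 1$ for System~1r. I would read this off the hypothesis that "all admissible initialization conditions are satisfied" and that each component has a positive intrinsic speed, i.e.\ the composite is well posed on its inputs.

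The main obstacle is that Definition~\ref{def:speed} gives only asymptotic rates, with constants $C_\eta,T_\eta$ depending on $\eta$. I need to check that taking a minimum of rates along a chain does not silently lose an $\eta$ at each stage. I would handle this by working throughout with the rate functional itself, $-\limsup_{t\to\infty} t^{-1}\ln|u(t)-u^*|$, rather than with explicit constants. Each module lemma is already phrased as a lower bound on this quantity in terms of the input rates, so only the finitely many inequalities $\ge\min\{\cdot\}$ are chained. Because the graph is finite, no uniformity in depth is needed. The bound is thus independent of the length of the chain, since the minimum over finitely many modules is attained by some module.
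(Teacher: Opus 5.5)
Your proposal is correct and is essentially the paper's argument. The paper inducts on the number of modules by deleting a sink of the computational graph and applying that module's time-dependent-input guarantee with all inputs converging at rate at least $r_0=\min\{\rho,\rho_{\min}\}$; this is the reverse reading of your induction along a topological order. Your extra bookkeeping goes slightly beyond what the paper spells out but does not change the argument: initialization conditions are local to each module, input-domain constraints such as $a(t)\ge 1$ for System~1r must hold, and you chain the rate functional rather than $\eta$-dependent constants.
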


\begin{proof}

Let $\rho$ be the least of the convergence rates of the external inputs, let
$\rho_{\min}=\min_M\rho_M$ over the modules $M$ of the computation, and set
$r_0:=\min\{\rho,\rho_{\min}\}$. We show, by induction on the number of modules $N$, that
every module output converges at rate at least $r_0$.

If $N=1$, all inputs of the single module are external and converge at rate at least
$\rho$. Since the module has intrinsic speed $\rho_M$, its output converges at rate at least
$\min\{\rho,\rho_M\}\ge r_0$.

Let $N>1$, and let $M$ be a sink of the computational graph, that is, a module whose output is not an
input of any other module. Deleting $M$ leaves a computation with $N-1$ modules, whose
external inputs converge at rate at least $\rho$ and whose modules have intrinsic speed at
least $\rho_{\min}$; by the induction hypothesis, each of their outputs converges at rate at
least $r_0$. Every input of $M$ is therefore either external, converging at rate at least
$\rho\ge r_0$, or the output of one of these modules, converging at rate at least $r_0$.
Since $M$ has intrinsic speed $\rho_M\ge\rho_{\min}\ge r_0$, its output converges at rate at
least $\min\{r_0,\rho_M\}=r_0$.

In particular, the output of the composite computation converges at rate at least
$r_0$.
\end{proof}

\begin{remark}
 The following special cases illustrate typical values of $\rho_{\min}$:
\begin{itemize}
    \item if all component modules are  exponential (Theorems~\ref{thm:exponential},~\ref{thm:real_exponential}) or arithmetic modules having intrinsic speed~$1$, then $\rho_{\min}=1$;
    \item if all logarithm modules are restricted to System~1r ($a^*\ge 1$) or System~2 (full domain), then $\rho_{\min}=1$;
\end{itemize}
\end{remark}

\subsection[A mass-action system with input-independent speed on the restricted domain]{A mass-action system with  input-independent speed on the restricted domain}

We now apply Theorem~\ref{thm:composite_log} to obtain the first mass-action logarithm construction with input-independent speed. The key insight is to shift the input away from the regime near $a^*=1$, where the convergence rate of System~3 becomes arbitrarily slow. Multiplying the input by $e>1$ ensures the shifted input  converges to a value at least $e$, so Corollary~\ref{lem:massaction_log_eventual} gives a uniform lower bound on the speed, and a final subtraction recovers the correct logarithm value.

Assume that the input is $a(t)$ with limiting value $a^*\ge 1$. We use the following three steps.  Below, the multiplication and subtraction steps are implemented using the arithmetic modules from \cite{anderson2025arithmetic}, and the preservation of input-independent speed under composition follows from Theorem~\ref{thm:composite_log}.
\begin{enumerate}
    \item Multiply by $e$\footnotemark:
    \[
    a^* \mapsto ea^*.
    \]
    \item Apply the logarithm module from \eqref{eq:massactionforlog}:
    \[
    ea^* \mapsto \ln(ea^*) = 1+\ln a^*.
    \]
    
    \item Subtract $1$:
    \[
    1+\ln a^* \mapsto \ln a^*.
    \]
\end{enumerate}

The final output is $\ln a^*$. We refer to this composite construction as System~5.

\begin{theorem}[System~5 at input-independent speed]
\label{thm:system5}
For any input $a(t)\to a^*\ge 1$ at rate at least $\rho_a>0$, System~5 computes $\ln a^*$ at rate at least $\min\{\rho_a,\,1\}$.
\end{theorem}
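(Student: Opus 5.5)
The plan is to treat System~5 as a three-stage feedforward chain and apply Theorem~\ref{thm:composite_log}. The only new ingredient is the intrinsic speed of the middle (System~3) stage when its input is the output of the multiplication stage. The chain runs as follows:
\[
a(t)\ \mapsto\ w(t)\to ea^*\ \mapsto\ x(t)\to\ln(ea^*)\ \mapsto\ v(t)\to\ln a^* .
\]
The constant $e$ is supplied by an identification module whose input species is held at concentration $e$. That input converges at infinite rate, so it does not affect the minimum.

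First, the multiplication module of \cite{anderson2025arithmetic} has the time-dependent-input rate guarantee with intrinsic speed $1$. It therefore gives $w(t)\to ea^*$ at rate at least $\min\{\rho_a,1\}$.

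Second, $w(t)$ is fed into System~3 with an admissible initialization $z(0)>1$, $x(0)=\ln z(0)$. Since $ea^*\ge e$, Corollary~\ref{lem:massaction_log_eventual} applies with $\rho_a$ replaced by the rate of $w$. It yields $x(t)\to 1+\ln a^*$ at rate at least $\min\{\min\{\rho_a,1\},1\}=\min\{\rho_a,1\}$. The point to stress is that the corollary's bound does not depend on $a^*$. The input is held away from the slow regime $a^*\approx 1$, where $c\ln c\to0$. The bound also does not need $w(t)\ge c$ for all $t$: the corollary only requires an eventual lower bound, which follows from $w(t)\to ea^*\ge e>2$.

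Third, the constant $1$ is subtracted. Since $1+\ln a^*\ge 1$, the rectified subtraction module of \cite{anderson2025arithmetic} suffices; it has intrinsic speed $1$ and time-dependent-input guarantees. It gives the output converging to $\ln a^*$ at rate at least $\min\{\rho_a,1\}$. Alternatively, the three stages can be packaged directly as an instance of Theorem~\ref{thm:composite_log} with $\rho_{\min}=1$, treating the System~3 stage as having intrinsic speed $1$ on inputs with limit at least $e$.

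The main obstacle is the second step. System~3 has no uniform intrinsic speed on its whole domain (Lemma~\ref{lem:massaction_log_const}), so Theorem~\ref{thm:composite_log} cannot be quoted blindly. One has to check that Corollary~\ref{lem:massaction_log_eventual} holds for a genuinely time-varying input $w(t)$ that need not be bounded below by $2$ initially, and that the waiting time $T_1$ before $z$ reaches $2$ affects only constants, not the rate.

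A second point to check in that step is nonnegativity. The multiplication module's output concentration $w(t)\ge0$ may start small, and $z$ must remain above $1$ throughout. This holds because $x=\ln z$ is invariant and $\dot z=0$ at $z=1$. With these points checked, the rate is $\min\{\rho_a,1\}$, independent of $a^*$.
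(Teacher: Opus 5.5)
Your proposal is correct and follows essentially the same route as the paper. The multiplication stage converges to $ea^*\ge e$ at rate $\min\{\rho_a,1\}$. Corollary~\ref{lem:massaction_log_eventual} then gives the System~3 stage the same rate, independently of $a^*$. Finally, the subtraction of $1$ together with Theorem~\ref{thm:composite_log} finishes the argument. The differences are cosmetic and do not affect the rate: you hold $e$ as a constant species instead of generating it with the exponential module (the paper's footnote and appendix), and you spell out the corollary's hypothesis checks.
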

\begin{proof}
Step~1 (multiplication by $e$) and Step~3 (subtraction of~$1$) are arithmetic modules from \cite{anderson2025arithmetic} with input-independent speed lower bound~$1$. For Step~2,  the output of Step~1 converges to $ea^*\ge e$ at rate at least $\min\{\rho_a,1\}$. Corollary~\ref{lem:massaction_log_eventual} therefore applies to the System~3 logarithm stage and gives convergence rate at least $\min\{\rho_a,1\}$. The overall rate of at least $\min\{\rho_a,\,1\}$ then follows from Theorem~\ref{thm:composite_log}.
\end{proof}

\footnotetext{Any constant $c > 1$ can replace $e$, changing Step 2's output to $\ln(ca^*) = \ln c + \ln a^*$ and Step 3's subtraction to $\ln c$ rather than $1$. The value $e$ is convenient because the subtraction in Step 3 is exactly $1$. Note also that the constant $e$ itself can be produced by running the exponential module \eqref{eq:exponential} with input $a^* = 1$, making this construction self-contained.}

\begin{remark}[Solution length]
\label{rem:length}
The convergence-rate notion used here is distinct from the \emph{length} of the solution curve used as a complexity measure by Bournez, Gra\c{c}a, and Pouly \cite{bournez2017ptime}; solution length is not analyzed here.
\end{remark}

\subsection[A mass-action system with input-independent speed on the full positive domain]{A mass-action system with  input-independent speed on the full positive domain}

We now arrive at the final construction of the paper, which resolves the logarithm problem in full. Specifically, we construct a mass-action system that computes $\ln a^*$ for every input $a^*>0$, does so in dual-rail representation, and achieves input-independent speed. The idea is to combine the logarithm module developed above with the arithmetic modules from \cite{anderson2025arithmetic} in such a way that every intermediate computation occurs at uniformly lower-bounded speed. The reciprocal, multiplication, subtraction, and maximum steps are implemented using modules from \cite{anderson2025arithmetic}, while the logarithm step is supplied by the construction developed in the previous subsection. By Theorem~\ref{thm:composite_log}, the overall construction also computes at uniformly lower-bounded speed.

We assume that the input is $a(t)$ with limiting value $a^*>0$. The construction proceeds as follows.

\begin{enumerate}
    \item Compute the reciprocal and record it in a separate species:
    \[
    a^* \mapsto \left(a^*, \frac{1}{a^*}\right).
    \]

    \item Multiply each component by $e$, the base of the natural logarithm:
    \[
    \left(a^*, \frac{1}{a^*}\right) \mapsto \left(ea^*, \frac{e}{a^*}\right).
    \]

    \item Compute the maximum of each component with $e$:
    \[
    \left(ea^*, \frac{e}{a^*}\right) \mapsto \left(\max\{ea^*,e\}, \max\left\{\frac{e}{a^*},e\right\}\right)
    =
    \begin{cases}
        (ea^*, e) & \mbox{if } a^* \ge 1, \\[0.3em]
        \left(e, \frac{e}{a^*}\right) & \mbox{if } a^* < 1.
    \end{cases}
    \]

    \item Apply the logarithm module to each component using the construction in \eqref{eq:massactionforlog}:
    \[
    \begin{cases}
        (ea^*, e) & \mbox{if } a^* \ge 1, \\[0.3em]
        \left(e, \frac{e}{a^*}\right) & \mbox{if } a^* < 1
    \end{cases}
    \mapsto
    \begin{cases}
        (1+\ln a^*, 1) & \mbox{if } a^* \ge 1, \\[0.3em]
        \left(1, 1-\ln a^*\right) & \mbox{if } a^* < 1.
    \end{cases}
    \]

    \item Subtract $1$ from each component:
    \[
    \begin{cases}
        (1+\ln a^*, 1) & \mbox{if } a^* \ge 1, \\[0.3em]
        \left(1, 1-\ln a^*\right) & \mbox{if } a^* < 1
    \end{cases}
    \mapsto
    \begin{cases}
        (\ln a^*, 0) & \mbox{if } a^* \ge 1, \\[0.3em]
        \left(0, -\ln a^*\right) & \mbox{if } a^* < 1.
    \end{cases}
    \]
\end{enumerate}

The final output is $\ln a^*$ in dual-rail representation. We refer to this composite construction as System~6.

\begin{theorem}[System~6 at input-independent speed]
\label{thm:system6}
For any input $a(t)\to a^*>0$ at rate at least $\rho_a>0$, System~6 computes $\ln a^*$ in dual-rail representation at rate at least $\min\{\rho_a,\,1\}$.
\end{theorem}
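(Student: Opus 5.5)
We follow the composite structure of System~6 stage by stage, as in the proof of Theorem~\ref{thm:system5}, and then invoke Theorem~\ref{thm:composite_log} for the overall rate. The plan is to show that every stage converges at rate at least $\min\{\rho_a,1\}$, uniformly in $a^*>0$.

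\textbf{Arithmetic stages.} Step~1 (reciprocal), Step~2 (multiplication by the constant $e$), Step~3 (maximum with $e$) and Step~5 (subtraction of $1$) are modules of \cite{anderson2025arithmetic} with intrinsic speed~$1$. For the maximum we write $\max\{u,e\}=e+(u-e)_+$, using rectified subtraction followed by addition; the same applies to the final subtraction of~$1$. Since $a(t)\to a^*>0$ at rate $\rho_a$, the time-dependent-input guarantees of those modules give the following. The two components after Step~3 converge to $(\max\{ea^*,e\},\max\{e/a^*,e\})$ at rate at least $\min\{\rho_a,1\}$, with no dependence on $a^*$.

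\textbf{Logarithm stage (main obstacle).} Each component entering Step~4 has a limit at least $e$, whether $a^*\ge1$ or $a^*<1$. So Corollary~\ref{lem:massaction_log_eventual} applies to each System~3 copy, provided it is initialized with $z(0)>1$ and $x(0)=\ln z(0)$. It gives each logarithm output rate at least $\min\{\rho_a,1\}$. The delicate point is that the corollary only needs the input to converge to a value $\ge e$. It does not need the input to stay above $e$ (or above $1$) for all time. The transient where $a(t)$ dips below $2$ is handled inside that corollary: $z$ stays above $1$ on the invariant curve $x=\ln z$ and reaches $2$ in finite time once the input exceeds $2+\eta$. The finite waiting time changes only the constant $C_\eta$, not the rate. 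We should also check that the input to this stage is nonnegative-valued, which holds because the maximum module's output is a concentration. Above all, the constant $2\ln 2\ge1$ is independent of $a^*$. This is exactly where the shift by $e$ and the maximum remove the degeneracy $a\ln a\to0$ near $a=1$ (Lemma~\ref{lem:massaction_log_const}).

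\textbf{Conclusion.} After Step~5 the two rails converge to $(\ln a^*,0)$ if $a^*\ge1$ and to $(0,-\ln a^*)$ if $a^*<1$. In both cases $x_p-x_n\to\ln a^*$. A difference of two quantities each converging at rate $r$ converges at rate at least $r$, so the dual-rail output has rate at least $\min\{\rho_a,1\}$. Formally this is the induction of Theorem~\ref{thm:composite_log} applied to the feedforward graph of System~6, with every module's intrinsic speed bounded below by~$1$: the System~3 stages by Corollary~\ref{lem:massaction_log_eventual}, the arithmetic stages by \cite{anderson2025arithmetic}. Hence $\rho_{\min}\ge1$, which yields the claimed rate uniformly over $a^*>0$.
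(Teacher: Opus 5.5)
Your proposal is correct and is essentially the paper's proof. The reciprocal, multiplication by $e$, maximum with $e$, and subtraction of $1$ are taken from \cite{anderson2025arithmetic}; each System~3 rail is handled by Corollary~\ref{lem:massaction_log_eventual} because both of its limiting inputs are at least $e$; and Theorem~\ref{thm:composite_log} assembles the overall rate $\min\{\rho_a,1\}$. Your additional remarks (writing $\max\{u,e\}=e+(u-e)_+$, the transient below $2$ being absorbed inside the corollary, and the rate of the difference $x_p-x_n$) are elaborations the paper leaves implicit, not a different route.
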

\begin{proof}
Steps~1 and~2 (reciprocal and multiplication by $e$) and Step~5 (subtraction of~$1$) are arithmetic modules from \cite{anderson2025arithmetic}. Step~3 (maximum with $e$) is also from \cite{anderson2025arithmetic}. In Step~4, each component  converges to a limiting value at least $e$ at rate at least $\min\{\rho_a,1\}$, so Corollary~\ref{lem:massaction_log_eventual} applies separately to each System~3 logarithm rail and gives rate at least $\min\{\rho_a,1\}$ for that step. The full rate then follows from Theorem~\ref{thm:composite_log}.
\end{proof}

\section{Composite computations at input-independent speed}
\label{sec:general_composite}

The following theorem extends the composition result of Theorem~\ref{thm:composite_log} to cover all modules in this paper, including Systems~5 and~6.

\begin{theorem}[General composite computations at input-independent speed]
\label{thm:composite}
Consider a computation composed from a finite number of modules drawn from the arithmetic modules of~\cite{anderson2025arithmetic}, the exponential modules of Theorems~\ref{thm:exponential} and~\ref{thm:real_exponential}, and any logarithm module of this paper (Systems~1--6). Suppose that each component module has a positive intrinsic speed, all rate constants equal~$1$, the external inputs to the composite computation converge at rate at least $\rho>0$, and all initialization conditions are satisfied. Then the output converges to the correct value at rate at least $\min\{\rho,\rho_{\min}\}$, where $\rho_{\min}$ is the minimum intrinsic speed  among the component modules.
\end{theorem}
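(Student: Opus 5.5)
The plan is to reuse the induction of Theorem~\ref{thm:composite_log} almost verbatim. The point is that that argument depended on only one property of each module $M$: a \emph{time-dependent-input rate guarantee}. That is, whenever every input of $M$ converges to a limit in the module's domain at rate at least $r$, the output of $M$ converges to the correct value at rate at least $\min\{r,\rho_M\}$. So the first step will be to check that this property holds for every admissible module type, including Systems~5 and~6. Once that is done, the induction on the number of modules $N$ goes through unchanged. Delete a sink $M$ of the finite acyclic computational graph, apply the induction hypothesis to the remaining $N-1$ modules, and observe that every input of $M$ converges at rate at least $r_0:=\min\{\rho,\rho_{\min}\}$. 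The guarantee for $M$ then gives output rate at least $\min\{r_0,\rho_M\}=r_0$.

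The guarantees needed are as follows.
\begin{itemize}
\item For the arithmetic modules, the guarantee is the time-dependent-input result of \cite{anderson2025arithmetic}.
\item For the exponential modules, it is exactly Theorems~\ref{thm:exponential} and~\ref{thm:real_exponential}.
\item For Systems~1--4, it is the time-varying-input statements already used in Theorem~\ref{thm:composite_log}: Corollary~\ref{cor:system1r}, Lemma~\ref{lem:system2_log}, and Lemma~\ref{lem:massaction_log_speed} on the appropriate domains.
\item For Systems~5 and~6, it is Theorems~\ref{thm:system5} and~\ref{thm:system6}, with $\rho_M=1$.
\end{itemize}

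One could regard Systems~5 and~6 either as single modules or as sub-composites of the modules already covered. I would note that the two viewpoints agree. Flattening them into their constituent arithmetic and System~3 modules yields a larger feedforward graph, still finite and acyclic, to which the same induction applies. The only subtlety in flattening is that the System~3 stage inside them has intrinsic rate governed by Corollary~\ref{lem:massaction_log_eventual} rather than by a fixed constant, so in the flattened form I would treat that stage's intrinsic speed as $1$ on its restricted domain.

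The main obstacle is making sure the hypotheses of each module's guarantee are actually met when its inputs come from upstream modules rather than from clean external signals. The issues are domain conditions and initialization. For domains: System~1r and System~5 need inputs eventually at least $1$ with limit at least $1$, and System~3 inside System~6 needs limits at least $e$. For initialization: each module has its own compatibility condition, such as $\ln x(0)=z(0)$ or $x(0)=\ln z(0)$. I would handle both through the standing hypothesis that ``all initialization conditions are satisfied'', interpreted so that each module's inputs have limits in its domain and satisfy any pathwise bounds it requires (e.g.\ $a(t)\ge 1$ for System~1r). Within Systems~5 and~6, the eventual lower bound in Corollary~\ref{lem:massaction_log_eventual} only requires the limit to be at least $e$, which the max/multiplication steps guarantee. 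Since each module's initialization constraint involves only its own species, the constraints are independent across modules and can be imposed simultaneously.

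Finally, dual-rail inputs and outputs, as in the real exponential module and System~6, are handled by treating each rail as a separate edge of the graph. A dual-rail value converges at rate at least $r$ whenever both rails do, and this fits the induction directly.
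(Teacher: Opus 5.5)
Your proposal is correct and is essentially the paper's proof: the paper says the argument is identical to the sink-deletion induction of Theorem~\ref{thm:composite_log}, and notes only that Systems~5 and~6 have intrinsic speed $\rho_M=1$ by Theorems~\ref{thm:system5} and~\ref{thm:system6}, so they can serve as nodes of the computational graph. Your extra remarks on flattening, domain conditions, initialization independence and dual-rail edges are consistent with this but not needed, because positive intrinsic speed of each component module is already a hypothesis of the theorem.
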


\begin{proof}
Identical to that of Theorem~\ref{thm:composite_log}. Systems~5 and~6 have intrinsic speed $\rho_M=1$ (Theorems~\ref{thm:system5} and~\ref{thm:system6}), so they may also be used as nodes of the computational graph.
\end{proof}

\begin{corollary}[Rate-$1$ lower bound]
\label{cor:composite_rate1}
Under the hypotheses of Theorem~\ref{thm:composite}, if  every component module  has intrinsic speed at least~$1$ and $\rho\ge 1$, then the composite computation proceeds at rate at least~$1$.
\end{corollary}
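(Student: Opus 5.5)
This corollary should follow at once from Theorem~\ref{thm:composite}, so the plan is simply to specialize that theorem to the case where every intrinsic speed is at least $1$. Let $\rho$ denote the common lower bound on the rates of the external inputs, and let $\rho_M$ denote the intrinsic speed of module $M$ in the computational graph. I would define
\[
\rho_{\min}=\min_M \rho_M .
\]
This minimum is over the finitely many modules of the graph. By hypothesis each $\rho_M\ge 1$, so $\rho_{\min}\ge 1$. The hypotheses of Theorem~\ref{thm:composite} are assumed to hold: positive intrinsic speeds, unit rate constants, inputs converging at rate at least $\rho>0$, and admissible initializations. That theorem then gives that the output converges at rate at least $\min\{\rho,\rho_{\min}\}$.

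The remaining step is the elementary inequality
\[
\min\{\rho,\rho_{\min}\}\ge\min\{1,1\}=1.
\]
It follows from monotonicity of $\min$ in each argument, since $\rho\ge1$ and $\rho_{\min}\ge1$. I would also note that Definition~\ref{def:speed} is monotone in the rate: "rate at least $r$" implies "rate at least $r'$" for every $r'\le r$. So the conclusion "rate at least $1$" follows directly.

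There is essentially no obstacle here. The only point needing care is checking that the modules one intends to use really have intrinsic speed at least $1$. For the exponential modules this is Theorems~\ref{thm:exponential} and~\ref{thm:real_exponential}; for System~2 it is Lemma~\ref{lem:system2_log}; for System~1r it is Corollary~\ref{cor:system1r}; for Systems~5 and~6 it is Theorems~\ref{thm:system5} and~\ref{thm:system6}; and the unit-speed arithmetic modules come from \cite{anderson2025arithmetic}. In each case the bound is of the form $\min\{\rho_{\text{in}},1\}$, so each module contributes intrinsic speed $1$. This is the content of the preceding remark, but it is a hypothesis of the corollary rather than something the proof must establish. Modules such as System~1 on the full domain, or System~3 near $a^*=1$, are excluded by that hypothesis.
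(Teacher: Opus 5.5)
Your proposal is correct and is essentially the paper's argument: the corollary is stated without a separate proof because it is the direct specialization of Theorem~\ref{thm:composite}, where $\rho_{\min}\ge 1$ and $\rho\ge 1$ give $\min\{\rho,\rho_{\min}\}\ge 1$. Your added remarks on monotonicity of the rate notion and on which modules actually satisfy the speed-$1$ hypothesis are accurate but not needed for the proof.
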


As a concrete illustration, consider computing $f(a,b)=e^a\cdot\ln b$ for $a\in\R$ and $b>0$. This composes the real exponential module (Theorem~\ref{thm:real_exponential}), the full-domain logarithm module (Theorem~\ref{thm:system6}), and a multiplication module from~\cite{anderson2025arithmetic}. If $a(t)\to a^*$ and $b(t)\to b^*$ each at rate at least $\rho$, with all initialization conditions satisfied, then by Theorem~\ref{thm:composite} the product $e^{a^*}\cdot\ln b^*$ is computed at rate at least $\min\{\rho,1\}$, independently of $a^*$ and $b^*$---and both computations run simultaneously from $t=0$.

\section{Sensitivity to initialization}\label{sec:init}

All of the constructions in this paper rely on an exact algebraic relation between an output species and an internal species---$x=e^z$ for the exponential and $x=\ln z$ (or the corresponding relation on a dual rail) for the logarithm---that is preserved because the initial conditions satisfy a compatibility condition. It is therefore natural to ask what happens when this condition is violated. The answer depends on the module. For some of the simpler systems the mismatch produces a fixed multiplicative or additive bias, whereas for the mass-action logarithm systems it can also change the attracting state by driving an output rail to the boundary at zero.

For the exponential module, the conserved quantity is
\[
\ln x(t)-z(t)=\ln x(0)-z(0).
\]
Thus, if $\ln x(0)-z(0)=\delta$, then
\[
x(t)=e^{\delta}e^{z(t)},
\]
and hence, whenever $z(t)\to a^*$, the output converges to $e^{\delta}e^{a^*}$. The initialization error is therefore a persistent multiplicative factor $e^{\delta}$. The same conclusion applies separately to the two exponential submodules used in the real-exponential construction.

For Systems~1 and~2, the internal $z$-dynamics do not depend on the output species $x$. Moreover,
\[
x(t)-\ln z(t)=x(0)-\ln z(0)=\delta.
\]
Consequently, if the internal state still converges to $z(t)\to a^*$, then
\[
x(t)\to\ln a^*+\delta.
\]
For these two systems the compatibility error is therefore an exact persistent additive bias.

The situation is different for the mass-action System~3, because the output species appears in the vector field for the internal state. If $x(0)-\ln z(0)=\delta$, then the same conserved quantity gives $x(t)=\ln z(t)+\delta$, and the reduced dynamics become
\[
\dot z=(a(t)-z)(\ln z+\delta)z.
\]
Thus the mismatch does more than translate the reported value: it introduces the boundary equilibrium $x=0$, $z=e^{-\delta}$. For constant input $a>0$, the intended equilibrium $z=a$ lies in the positive-output region precisely when $\ln a+\delta>0$. In that case the compatible-branch output is $\ln a+\delta$; if instead $\ln a+\delta\le0$, the nonnegative concentration $x$ cannot follow the formal additive shift below zero, and the boundary $x=0$ can become the attracting state. Hence an initialization mismatch can change the attractor, not merely add a constant error. At the threshold $\ln a+\delta=0$, the asymptotic convergence behavior is degenerate and need not retain the rate of the correctly initialized system.

Systems~4p and~4n inherit the same phenomenon rail by rail. For constant input, writing $\delta_p=x_p(0)-\ln z(0)$ and $\delta_n=x_n(0)-\ln y(0)$, the corresponding interior limits, when those rails remain away from zero, are $\ln a+\delta_p$ and $-\ln a+\delta_n$, respectively. If either quantity would be nonpositive, the associated mass-action rail can instead be trapped at its zero boundary. Thus the dual-rail output is generally piecewise in the initialization mismatch rather than a globally additive perturbation of $\ln a$.

Systems~5 and~6 deliberately shift the arguments supplied to System~3 away from the boundary before taking logarithms. This gives a positive margin for sufficiently small initialization errors in those logarithm stages. Nevertheless, the subsequent rectified-subtraction operations make the final perturbation piecewise, so there is no global statement that an arbitrary compatibility error passes unchanged through the entire composite construction.

This behavior stands in contrast to the arithmetic modules of \cite{anderson2025arithmetic}, which are self-correcting on their admissible strictly positive state space: their outputs converge to the correct value without a codimension-one compatibility relation between internal and output species. The distinction is not an artifact of our particular constructions. Hemery and Fages characterize the functions a mass-action CRN can \emph{stabilize}---drive to $f(X)$ from a full-dimensional domain of admissible initial conditions for the intermediate and output species, their notion of absolute functional robustness---as \emph{exactly} the real algebraic functions; in their notation $\mathcal{F}_S=\mathcal{F}_A$ \cite[Thm.~11]{hemery2024absolute}. The same algebraic boundary arises independently in the Lyapunov real-time framework of Fletcher, Klinge, Lathrop, Nye, and Rayman \cite{fletcher2025}, who prove that the reals CRN-computable in that sense are exactly the algebraic numbers.
Our argument uses only the \emph{necessity} direction $\mathcal{F}_S\subseteq\mathcal{F}_A$ \cite[Lem.~14]{hemery2024absolute}---that a stabilizable function must satisfy a nonzero polynomial relation---and not their converse compilation result. Since $\exp$ and $\ln$ are transcendental, they satisfy no such relation, so no mass-action CRN stabilizes them. Some degree of initialization sensitivity is therefore unavoidable, and the compatibility conditions imposed here are an intrinsic feature of directly computing a transcendental function, not a deficiency of the specific networks.

This settles, in the negative, a question one might ask about these constructions---whether some alternative could avoid initialization sensitivity altogether: none can be self-correcting on a full-dimensional domain of admissible initial conditions in the way the arithmetic modules are. The mechanism is exactly that of \cite[Ex.~6]{hemery2024absolute}: our modules attain the exact value only on the measure-zero manifold fixed by the compatibility relation (e.g. $x(0)=\ln z(0)$), whose empty interior is precisely what the stabilization notion excludes.

\section{Discussion}
\label{sec:discuss}

The constructions developed in this paper illustrate that there is no single canonical way to compute the logarithm by reaction networks. Rather, each construction represents a different trade-off among desirable properties, including chemical realizability, mass-action structure, full input domain, and input-independent speed. Figure~\ref{systems} summarizes these trade-offs. In particular, while System~6 is the most complete construction---it is mass-action, valid on the full positive domain, and has input-independent speed---it is also the most resource-intensive, since it is obtained by composing several modules. In settings where the number of species or the complexity of the implementation is the dominant constraint, one of the simpler constructions may be preferable, even at the cost of restricted domain or input-dependent speed.

More broadly, this work should be viewed as a beginning rather than an endpoint. The exponential and logarithm functions were chosen here as natural first test cases: they are foundational transcendental functions, they already exhibit many of the main obstacles that arise in chemistry-based analog computation, and they serve as useful templates for more complicated function evaluations. The results of this paper show that these two functions can be computed directly, without truncating power series, while still admitting rigorous convergence guarantees---exact output, arbitrary accuracy given sufficient time, and input-independent speed---over their entire domains and in carefully designed settings. Furthermore, the modules compose  in finite feedforward computations satisfying the individual module hypotheses: by Theorems~\ref{thm:composite_log} and~\ref{thm:composite}, any computation built from these modules and the arithmetic modules of~\cite{anderson2025arithmetic} that satisfies those hypotheses inherits the same guarantee,  with the convergence rate determined by the external-input rates and the intrinsic speeds of the component modules. A natural next step is to extend these ideas to other transcendental functions and to develop more systematic principles for constructing efficient reaction network modules beyond the exponential and logarithm.  At the same time, even for the exponential and logarithm themselves,
basic structural questions remain.

\begin{figure}[h!]
  \begin{center}

\begin{tikzpicture}[scale=1.5]
\node (chem) at (-4,0) {}; %% CHEM
\draw [ultra thick, draw=black, fill=yellow!20, opacity=0.3] (chem)  ellipse (3 and 3);
\node  [above = 3 of chem] {\bf Chemistry};
\node  [below = 2.5 of chem] {\specialcell{(System 1r)}};

\node (ma) at (-3.5,0) {}; %% MASS-ACTION
\draw [ultra thick, draw=black, fill=red!20, opacity=0.3] (ma)  ellipse (2.5 and 1.5);
\node  [above left = 0.4 of ma] {\specialcell[]{\bf Mass-action}};
\node  [left = 1 of ma] { \specialcell{\eqref{eq:massactionforlog}}};
    
\node (domain) at (0,0) {}; %% DOMAIN
\draw [ultra thick, draw=black, fill=blue!20, opacity=0.3] (domain)  ellipse (3 and 3);
\node  [above = 2 of domain] {\bf \specialcell{ Full\\Domain}};
\node  [right = 2 of domain] { \specialcell{\eqref{eq:polyforlog}}};
\node  [below = 2 of domain] { \specialcell{\eqref{eq:polyforlog_ii}}};

\node (bounded) at (-2,-1.75) {}; %% BOUNDED
\draw [ultra thick, draw=black, fill=green!10, opacity=0.3] (bounded)  ellipse (3.5 and 2);
\node  [below = 0.2 of bounded] {\bf \specialcell{\\\\\\Input-independent\\speed}};

\node  [above = 3 of bounded] { \specialcell{(System 
4)}};
\node  [below = 0.7 of chem] {\specialcell{(System 5)}};
\node  [above = 1.5 of bounded] { \specialcell{(System 6)}};

\end{tikzpicture}
\caption[Comparison of the logarithm constructions developed in this paper. Each ellipse represents a desirable property: Chemistry, Mass-action, Full Domain, and Input-independent speed.]{Comparison of the logarithm constructions developed in this paper. Each ellipse represents a desirable property: \textbf{Chemistry} (interpreted here in a broad sense, namely compatibility with a chemistry-inspired realization using nonnegative-valued kinetics, not necessarily strict mass-action), \textbf{Mass-action} (satisfies mass-action kinetics), \textbf{Full Domain} (computes $\ln a^*$ for all $a^*>0$), and \textbf{Input-independent speed} (converges at uniformly lower-bounded speed). Each system is placed according to the properties it satisfies.}
\label{systems}
  \end{center}
 \end{figure}
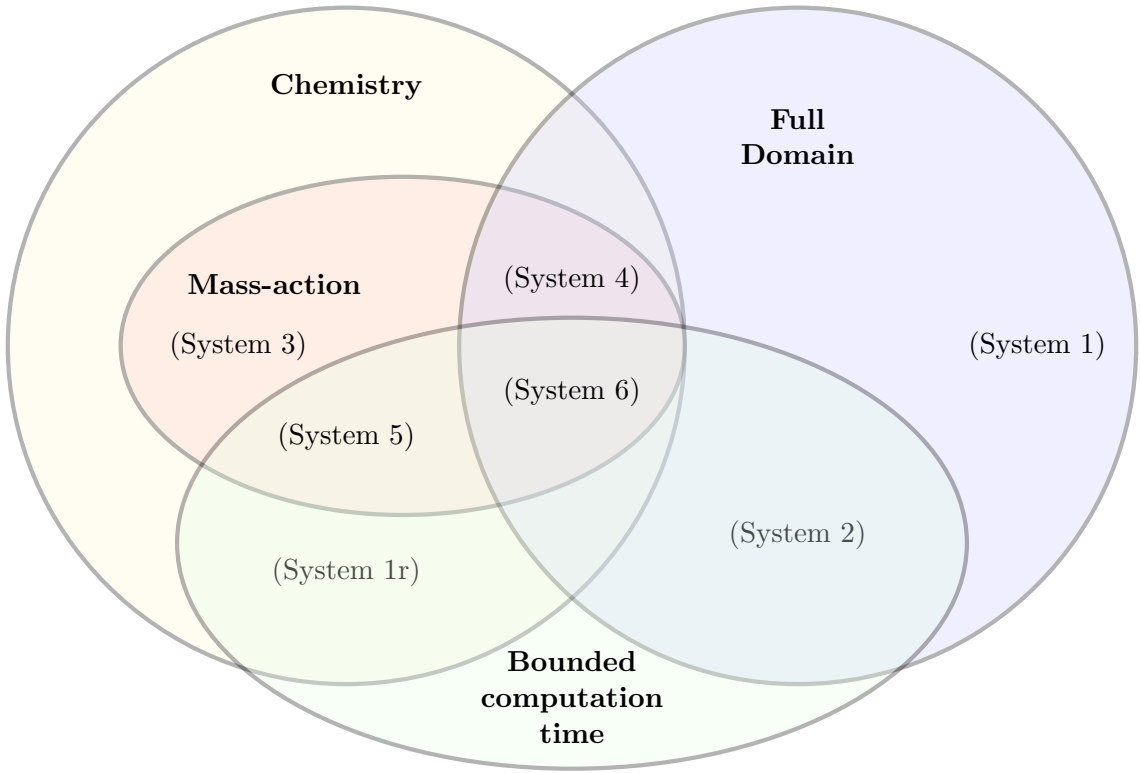

\appendix

\section{Background information}

The following two lemmas are from \cite{anderson2025arithmetic} (Lemmas 3.6 and 3.7 in the original work), restated here for easy reference. 

\begin{lemma}
\label{thm:system1_analysis}
Let $g_1:\R_{\ge 0} \to \R$ be a real-valued function that converges to $g_1^* \in \R$ at a rate of at least $\rho_{g_1}$; let $g_2:\R_{\ge 0} \to \R$ be a real-valued function that converges to a positive limit $g_2^* \in \R_{> 0}$ at a rate of at least $\rho_{g_2}$. 
We assume that $g_1, g_2$ are smooth enough so that for any $x(0) = x_0 \ge 0$, the following non-autonomous differential equation has a unique solution $x: \R_{\ge 0} \to \R$ for all time: 
\begin{align}\label{eq:linear_simple}
\dot x(t) &= g_1(t) - g_2(t)x(t). 
\end{align}
Then $x(t)$ converges to $g_1^*/g_2^*$ at a rate of at least
\begin{align}
\min\{\rho_{g_1}, \rho_{g_2}, g_2^*\}. 
\end{align}
\end{lemma}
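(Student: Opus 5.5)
The plan is to solve the linear equation \eqref{eq:linear_simple} with an integrating factor. The tail of the solution then splits into a decaying homogeneous part and a forcing part, and the forcing part is driven only by the input errors. Write $x^*=g_1^*/g_2^*$, $u(t)=x(t)-x^*$, $\varepsilon_1(t)=g_1(t)-g_1^*$ and $\varepsilon_2(t)=g_2(t)-g_2^*$. Then
\[
\dot u = -g_2(t)\,u + \bigl(\varepsilon_1(t)-x^*\varepsilon_2(t)\bigr),
\]
since $g_1-g_2x = \varepsilon_1 + g_1^* - (g_2^*+\varepsilon_2)(u+x^*) = \varepsilon_1 - x^*\varepsilon_2 - g_2 u$. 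The forcing term $f(t):=\varepsilon_1-x^*\varepsilon_2$ converges to $0$ at rate at least $\min\{\rho_{g_1},\rho_{g_2}\}$. Indeed, fix $\eta>0$ small. By Definition~\ref{def:speed} there are $C,T$ with $|f(t)|\le C e^{-(r-\eta)t}$ for $t\ge T$, where $r=\min\{\rho_{g_1},\rho_{g_2}\}$.

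Next, control the decay coefficient. Since $g_2(t)\to g_2^*>0$, for any $\eta\in(0,g_2^*)$ there is $T'$ with $g_2(t)\ge g_2^*-\eta$ for $t\ge T'$. Let $T_0=\max\{T,T'\}$. Variation of constants from $T_0$ gives
\[
u(t)=u(T_0)e^{-\int_{T_0}^t g_2}+\int_{T_0}^t e^{-\int_s^t g_2}\,f(s)\,ds ,
\]
so
\[
|u(t)|\le |u(T_0)|e^{-(g_2^*-\eta)(t-T_0)}+C\int_{T_0}^t e^{-(g_2^*-\eta)(t-s)}e^{-(r-\eta)s}\,ds .
\]
The finiteness of $u(T_0)$ follows from the assumed global existence and uniqueness.

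The elementary convolution estimate then bounds the integral by a constant times $e^{-(\min\{g_2^*,r\}-\eta)t}$ when the two exponents differ. When the exponents coincide, the bound is $t\,e^{-(\cdot)t}$, and the polynomial factor can be absorbed by sacrificing a further $\eta$. Hence $|u(t)|\le C'_\eta e^{-(\min\{\rho_{g_1},\rho_{g_2},g_2^*\}-2\eta)t}$ for large $t$. Because $\eta$ is arbitrary, taking the limsup in Definition~\ref{def:speed} yields rate at least $\min\{\rho_{g_1},\rho_{g_2},g_2^*\}$.

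The only mildly delicate points are the equal-exponent case in the convolution and the bookkeeping of the $\eta$-losses. Both are handled by the arbitrariness of $\eta$. If either input converges exactly, so that its rate is $+\infty$, the same argument applies with that forcing term absent.
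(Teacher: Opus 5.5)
Your proof is correct. The error equation $\dot u=-g_2u+(\varepsilon_1-x^*\varepsilon_2)$, the eventual bound $g_2\ge g_2^*-\eta$, and the variation-of-constants/convolution estimate from $T_0$ (with the equal-exponent case absorbed by a second $\eta$) give exactly the claimed rate. The paper does not reprove this lemma, since it is quoted from \cite{anderson2025arithmetic}, but your argument is the same Gr\"onwall/comparison mechanism the paper uses in its proof of Lemma~\ref{lem:massaction_log_speed}. One small imprecision: rate $+\infty$ does not mean exact convergence (e.g.\ $e^{-t^2}$), though your estimate with any finite exponent in place of $r$ covers that case unchanged.
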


\begin{lemma}
\label{thm:system2_analysis}
For $i \in \{1,2\}$, let $g_i:\R_{\ge 0} \to \R$ be a real-valued function that converges to a positive limit $g_i^* \in \R_{> 0}$ at a rate of at least $\rho_{g_i}$. 
We assume that $g_1, g_2$ are smooth enough so that for any $x(0) = x_0 > 0$, the following non-autonomous differential equation has a unique solution $x: \R_{\ge 0} \to \R_{\ge 0}$ for all time: 
\begin{align}\label{eq:nonlinear_simple}
\dot x(t) &= x(t)(g_1(t) - g_2(t)x(t)^m), \quad \quad (m \in \Z_{> 0}). 
\end{align}
Then $x(t)$ converges to $\left(g_1^*/g_2^*\right)^{1/m}$ at a rate of at least
\begin{align}
\min\{\rho_{g_1}, \rho_{g_2}, mg_1^*\}. 
\end{align}
\end{lemma}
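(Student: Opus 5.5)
The plan is to linearize the Bernoulli-type equation \eqref{eq:nonlinear_simple} by a change of variables and then invoke Lemma~\ref{thm:system1_analysis}. First, since $x(0)=x_0>0$ and the right-hand side of \eqref{eq:nonlinear_simple} has the form $x(t)\cdot h(t,x)$, uniqueness of solutions shows that $x(t)>0$ for all $t\ge 0$. Indeed, the zero function is a solution, so a trajectory starting positive cannot reach $0$ in finite time. Hence the quantity
\[
w(t):=x(t)^{-m}
\]
is well defined, positive, and continuously differentiable for all $t\ge 0$.

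Next I compute the equation satisfied by $w$:
\[
\dot w=-m\,x^{-m-1}\dot x=-m\,x^{-m}\bigl(g_1(t)-g_2(t)x^m\bigr)=m\,g_2(t)-m\,g_1(t)\,w(t).
\]
This is exactly the linear non-autonomous equation \eqref{eq:linear_simple} with $\tilde g_1:=m g_2$ and $\tilde g_2:=m g_1$. The function $\tilde g_1$ converges to $m g_2^*$ at rate at least $\rho_{g_2}$, and $\tilde g_2$ converges to the positive limit $m g_1^*$ at rate at least $\rho_{g_1}$; multiplying by the constant $m$ does not change convergence rates. The initial value $w(0)=x_0^{-m}$ is nonnegative, as Lemma~\ref{thm:system1_analysis} requires. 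That lemma therefore yields $w(t)\to (m g_2^*)/(m g_1^*)=g_2^*/g_1^*$ at rate at least $\min\{\rho_{g_1},\rho_{g_2},m g_1^*\}$. Note that negative values of $g_i(t)$ at early times cause no difficulty, since Lemma~\ref{thm:system1_analysis} allows real-valued $g_1,g_2$.

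Finally I transfer the rate back to $x$. We have $x(t)=\phi(w(t))$ with $\phi(w)=w^{-1/m}$. The map $\phi$ is smooth, and hence locally Lipschitz, on a neighborhood of the limit $w^*=g_2^*/g_1^*>0$. Since $w(t)\to w^*$, the trajectory eventually stays in a compact subinterval $[w^*/2,\,2w^*]$ of $(0,\infty)$. On that interval
\[
|x(t)-\phi(w^*)|\le L\,|w(t)-w^*|,
\]
where $L$ is the Lipschitz constant of $\phi$ there. This is precisely the content of Lemma~\ref{lem:lipschitz}. It follows that $x(t)\to \phi(w^*)=(g_1^*/g_2^*)^{1/m}$ at rate at least $\min\{\rho_{g_1},\rho_{g_2},m g_1^*\}$.

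The only delicate points are the positivity of $x$, which is needed to define $w$, and the requirement that $w$ stay away from $0$ so that the Lipschitz step applies. Both follow from uniqueness and from $w^*>0$, so I expect no real obstacle beyond verifying that the hypotheses of Lemma~\ref{thm:system1_analysis} are met after the substitution.
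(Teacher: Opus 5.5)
Your proof is correct. The paper gives no proof of this lemma; it is restated from \cite{anderson2025arithmetic}, so there is no in-paper argument to compare against.

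Your Bernoulli substitution $w=x^{-m}$, which gives $\dot w=m g_2(t)-m g_1(t)\,w$, is the natural reduction to Lemma~\ref{thm:system1_analysis}. It also explains why the intrinsic rate comes out as $m g_1^*$. Transferring the rate back through $w\mapsto w^{-1/m}$ via Lemma~\ref{lem:lipschitz} is exactly how the paper handles $\exp$ and $\ln$ elsewhere.

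One minor refinement concerns positivity. Deducing $x(t)>0$ from ``the zero function is a solution'' needs uniqueness through a point $(t_1,0)$ backward in time. The lemma's hypothesis only guarantees uniqueness for data $x_0>0$ at $t=0$, so it does not literally give you this. It is cleaner to note that any solution satisfies $x(t)=x_0\exp\bigl(\int_0^t (g_1(s)-g_2(s)x(s)^m)\,ds\bigr)>0$, which requires only local integrability of $g_1,g_2$.
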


The next lemma helps establish rates of convergence for related functions.

\begin{lemma} \label{lem:lipschitz}
     Suppose $z(t) \to z^*$ at speed at least $\rho_z$ in the sense of Definition~\ref{def:speed}.
    Let $f$ be locally Lipschitz continuous on some open interval $U$ containing $z^*$. 
    Then $f(z(t)) \to f(z^*)$ at speed at least $\rho_z$. 
\end{lemma}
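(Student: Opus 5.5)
Since $U$ is an open interval containing $z^*$, I will fix $\delta>0$ such that the closed interval $I=[z^*-\delta,z^*+\delta]$ lies in $U$. A function that is locally Lipschitz on $U$ is Lipschitz on any compact subinterval of $U$; this follows by covering $I$ with finitely many neighbourhoods on which $f$ is Lipschitz and chaining the estimates. So there is a constant $L\ge0$ with
\[
|f(u)-f(v)|\le L|u-v| \qquad\text{for all } u,v\in I.
\]

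Because $z(t)\to z^*$, there is a time $T_0$ such that $z(t)\in I$ for all $t\ge T_0$. Hence
\[
|f(z(t))-f(z^*)|\le L\,|z(t)-z^*| \qquad\text{for all } t\ge T_0.
\]

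Now fix $\eta\in(0,\rho_z)$. By Definition~\ref{def:speed} there are constants $C_\eta$ and $T_\eta$ with $|z(t)-z^*|\le C_\eta e^{-(\rho_z-\eta)t}$ for $t\ge T_\eta$. For $t\ge\max\{T_0,T_\eta\}$ this gives
\[
|f(z(t))-f(z^*)|\le LC_\eta e^{-(\rho_z-\eta)t}.
\]
If $L=0$, I replace the constant by any positive number. This is exactly the equivalent form of a rate of at least $\rho_z$ in Definition~\ref{def:speed}. Equivalently, taking logarithms, dividing by $t$ and passing to the $\limsup$ gives $\operatorname{rate}(f(z)\to f(z^*))\ge\rho_z-\eta$ for every $\eta$; the additive constant $\ln L/t$ vanishes in the limit, and the convention $\ln 0=-\infty$ covers the case where the error is zero. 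Letting $\eta\to0$ finishes the argument.

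The only point needing care is obtaining a uniform Lipschitz constant on a neighbourhood, which the compactness argument handles, and checking that $z(t)$ eventually stays in that neighbourhood, which follows from convergence. Otherwise the argument is routine.
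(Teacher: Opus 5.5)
Your proof is correct and follows essentially the same route as the paper. Both arguments choose a compact interval $I\subset U$ around $z^*$ on which $f$ is Lipschitz, use convergence to place $z(t)$ in $I$ eventually, and deduce $|f(z(t))-f(z^*)|\le L|z(t)-z^*|$ for large $t$; the paper then passes to the rate by taking the $\limsup$ of $\frac{1}{t}\ln|\cdot|$, whereas you mainly use the equivalent $C_\eta e^{-(\rho_z-\eta)t}$ form of Definition~\ref{def:speed}, which amounts to the same thing.
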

\begin{proof}
Since $z(t)\to z^*$ and $U$ is open, there exist $T>0$ and a compact interval
$I\subset U$ containing $z^*$ such that $z(t)\in I$ for all $t\ge T$.
Since $f$ is locally Lipschitz on $U$, it is Lipschitz on the compact interval $I$,
so there exists a constant $K>0$ such that for all $t\ge T$,
we have
\[
|f(z(t))-f(z^*)|\le K|z(t)-z^*|.
\]

If $f(z(t))=f(z^*)$ eventually, the conclusion is immediate. Otherwise, taking logarithms in the preceding inequality and dividing by $t$ gives
\[
\limsup_{t\to\infty}\frac{\ln|f(z(t))-f(z^*)|}{t}
\le
\limsup_{t\to\infty}\frac{\ln|z(t)-z^*|}{t},
\]
since $(\ln K)/t\to0$. Multiplying by $-1$ and applying Definition~\ref{def:speed} proves that $f(z(t))$ converges to $f(z^*)$ at rate at least $\rho_z$.
\end{proof}

\section{An explicit reaction network for System~5}\label{app:system5}

For concreteness we write out System~5 (Section~\ref{sec:log}) as an explicit mass-action
reaction network end to end. Recall that System~5 computes $\ln a^*$ for $a^*\ge 1$ in four
stages: generate the constant $e$, multiply the input by $e$, apply the logarithm module \eqref{eq:massactionforlog}, and subtract~$1$. We list the species and reactions stage by stage; all rate
constants are~$1$, and initial conditions are as indicated. Throughout, $A$ is the input
species with $a(t)\to a^*\ge 1$, and $I$ is a species held at the constant value $1$ (e.g.\
an input species with $i(0)=1$ and no consuming reactions).

\medskip\noindent\textbf{Stage 1: generate the constant $e$.}
Running the exponential module \eqref{eq:exponential} with the constant input $I$
(so $a\equiv 1$) produces a species $E$ with $e(t)\to e^{1}=e$:
\[
I\to I+Z_E,\qquad Z_E\to 0,\qquad I+E\to I+2E,\qquad Z_E+E\to Z_E,
\]
with $\dot z_E = i - z_E = 1 - z_E$ and $\dot e = e\,(i - z_E)$, initialized at
$e(0)=1$, $z_E(0)=0$, so that $e(t)=\exp(1-e^{-t})\to e$.

\medskip\noindent\textbf{Stage 2: multiply the input by $e$.}
The product $P=e\cdot a$ is formed by the multiplication module
\[
A+E\to A+E+P,\qquad P\to 0,\qquad \dot p = e\,a - p,
\]
so that $p(t)\to e\,a^*\ge e$. (This is the multiplication primitive of
\cite{anderson2025arithmetic}, included here explicitly.)

\medskip\noindent\textbf{Stage 3: logarithm of $P$.} Applying \eqref{eq:massactionforlog} with input $P$ yields a species $X$ with
$x(t)\to\ln(e\,a^*)=1+\ln a^*$:
\[
P+Z+X\to P+2Z+X,\qquad 2Z+X\to Z+X,\qquad P+X\to P+2X,\qquad Z+X\to Z,
\]
with $\dot z=(p-z)\,x z$ and $\dot x=(p-z)\,x$, initialized so that $x(0)=\ln z(0)$ and
$z(0)\ge e$. Because $p(t)\to e\,a^*\ge e$ at input-independent speed, Corollary~\ref{lem:massaction_log_eventual} applies and $x(t)\to 1+\ln a^*$ at input-independent speed.

\medskip\noindent\textbf{Stage 4: subtract $1$.}
We realize the final subtraction explicitly in dual rail using
\[
X\to X+W_p,\qquad I\to I+W_n,\qquad W_p\to0,\qquad W_n\to0,
\qquad W_p+W_n\to0.
\]
The designated output is the difference $w_p-w_n$. Since $i(t)\equiv1$, the corresponding
mass-action equations satisfy
\[
\frac{d}{dt}(w_p-w_n)=x(t)-1-(w_p-w_n).
\]
Thus, as $x(t)\to x^*=1+\ln a^*$, the dual-rail output converges to
\[
w_p(t)-w_n(t)\longrightarrow x^*-1=\ln a^*.
\]
By the linear tracking lemma from \cite{anderson2025arithmetic}, this stage preserves the
input-independent rate lower bound up to the intrinsic rate~$1$ of the tracking equation.

\medskip\noindent
Composing Stages~1--4 gives a single mass-action network whose designated dual-rail output
converges to $\ln a^*$ at input-independent speed. The full-domain System~6 is obtained
analogously, with a reciprocal stage and a maximum-with-$e$ stage (both from
\cite{anderson2025arithmetic}) inserted before Stage~3 and the output recorded in dual rail.

\subsection*{Acknowledgments}

DFA gratefully acknowledges financial support from the Office of the Vice Chancellor for Research at UW–Madison, with funding from the Wisconsin Alumni Research Foundation; from the Trustees of the William F. Vilas Estate; and from NSF grant DMS-2051498.
BJ is grateful for support from AMS-Simons Research Enhancement Grants for Primarily Undergraduate Institution (PUI) and NSF DMS-2051498. 
BJ and TDN gratefully acknowledge POSTECH and CM2LA for facilitating research discussions. 
We gratefully acknowledge Banff International Research Station for Mathematical Innovation and Discovery for hosting the authors at Research in Teams (Mathematical foundations of chemical computing (26rit047)).

\bibliographystyle{unsrt}
\bibliography{transcend}

\end{document}